\def\NZQ{\mathbb}               
\def\ZZ{{\NZQ Z}}
\def\RR{{\NZQ R}}
\def\G{{\mathcal G}}
\def\F{{\mathcal F}}
\def\P{{\mathcal P}}
\def\Mc{{\mathcal M}}
\def\Ac{{\mathcal A}}
\def\Pc{{\mathcal P}}
\def\Pc{{\mathcal P}}
\def\Pc{{\mathcal P}}
\def\Fc{{\mathcal F}}
\def\xb{{\mathbf x}}
\def\eb{{\mathbf e}}
\def\wb{{\mathbf w}}
\def\vb{{\mathbf v}}
\def\ub{{\mathbf u}}
\def\opn#1#2{\def#1{\operatorname{#2}}} 
\opn\chara{char} \opn\length{\ell} \opn\pd{pd} \opn\rk{rk}
\opn\projdim{proj\,dim} \opn\injdim{inj\,dim} \opn\rank{rank}
\opn\depth{depth} \opn\grade{grade} \opn\height{height}
\opn\embdim{emb\,dim} \opn\codim{codim}
\opn\Cl{Cl}
\opn\Tr{Tr} \opn\bigrank{big\,rank}
\opn\superheight{superheight}\opn\lcm{lcm}
\opn\trdeg{tr\,deg}
	\opn\reg{reg} \opn\lreg{lreg} \opn\ini{in} \opn\lpd{lpd}
	\opn\size{size} \opn\sdepth{sdepth}
	\opn\link{link}\opn\fdepth{fdepth}\opn\lex{lex}
	\opn\tr{tr}
	\opn\type{type}
	\opn\gap{gap}
	\opn\arithdeg{arith-deg}
	\opn\revlex{revlex}
	\opn\div{div} \opn\Div{Div} \opn\cl{cl} \opn\Cl{Cl}
	\opn\Spec{Spec} \opn\Supp{Supp} \opn\supp{supp} \opn\Sing{Sing}
	\opn\Ass{Ass} \opn\Min{Min}\opn\Mon{Mon}
	\opn\Ann{Ann} \opn\Rad{Rad} \opn\Soc{Soc}
	\opn\Im{Im} \opn\Ker{Ker} \opn\Coker{Coker} \opn\Am{Am}
	\opn\Hom{Hom} \opn\Tor{Tor} \opn\Ext{Ext} \opn\End{End}
	\opn\Aut{Aut} \opn\id{id}
	\def\F{{\mathcal F}}
	\opn\nat{nat}
	\opn\pff{pf}
	\opn\Pf{Pf} \opn\GL{GL} \opn\SL{SL} \opn\mod{mod} \opn\ord{ord}
	\opn\Gin{Gin} \opn\Hilb{Hilb}\opn\sort{sort}
	\opn\PF{PF}\opn\Ap{Ap}
	\opn\mult{mult}
	\opn\bight{bight}
	\opn\div{div}
	\opn\Div{Div}
	\opn\aff{aff}
	\opn\relint{relint} \opn\st{st}
	\opn\lk{lk} \opn\cn{cn} \opn\core{core} \opn\vol{vol}  \opn\inp{inp} \opn\nilpot{nilpot}
	\opn\link{link} \opn\star{star}\opn\lex{lex}\opn\set{set}
	\opn\width{wd}
	\opn\Fr{F}
	\opn\QF{QF}
	\opn\G{G}
	\opn\type{type}\opn\res{res}
	\opn\conv{conv}
	\opn\Deg{Deg}
	\opn\Sym{Sym}
	\opn\Con{Con}
	\opn\gr{gr}
	\def\pot#1#2{#1[\kern-0.28ex[#2]\kern-0.28ex]}
	\opn\dirlim{\underrightarrow{\lim}}
	\opn\inivlim{\underleftarrow{\lim}}
	\let\union=\cup
	\let\sect=\cap
	\let\dirsum=\oplus
	\let\iso=\cong
	\let\to=\rightarrow
	\def\Implies{\ifmmode\Longrightarrow \else
		\unskip${}\Longrightarrow{}$\ignorespaces\fi}
	\def\implies{\ifmmode\Rightarrow \else
		\unskip${}\Rightarrow{}$\ignorespaces\fi}
	\def\iff{\ifmmode\Longleftrightarrow \else
		\unskip${}\Longleftrightarrow{}$\ignorespaces\fi}
	\newtheorem{Theorem}{Theorem}[section]
	\newtheorem{Lemma}[Theorem]{Lemma}
	\newtheorem{Corollary}[Theorem]{Corollary}
	\newtheorem{Proposition}[Theorem]{Proposition}
	\newtheorem{Example}[Theorem]{Example}
	\let\epsilon\varepsilon
	\let\kappa=\varkappa
	\def\qed{\ifhmode\textqed\fi
		\ifmmode\ifinner\quad\qedsymbol\else\dispqed\fi\fi}
	\def\textqed{\unskip\nobreak\penalty50
		\hskip2em\hbox{}\nobreak\hfil\qedsymbol
		\parfillskip=0pt \finalhyphendemerits=0}
	\def\dispqed{\rlap{\qquad\qedsymbol}}
	\opn\dis{dis}
	\def\pnt{{\raise0.5mm\hbox{\large\bf.}}}
	\opn\Lex{Lex}
\begin{document}
		
		\title{The divisor class group of a discrete polymatroid }
		
		\author {J\"urgen Herzog, Takayuki Hibi, Somayeh Moradi and Ayesha Asloob Qureshi}

		\address{J\"urgen Herzog, Fachbereich Mathematik, Universit\"at Duisburg-Essen, Campus Essen, 45117
			Essen, Germany} \email{juergen.herzog@uni-essen.de}

		\address{Takayuki Hibi, Department of Pure and Applied Mathematics,
	Graduate School of Information Science and Technology, Osaka
	University, Suita, Osaka 565-0871, Japan}
\email{hibi@math.sci.osaka-u.ac.jp}

		\address{Somayeh Moradi, Department of Mathematics, Faculty of Science, Ilam University,
			P.O.Box 69315-516, Ilam, Iran}
		\email{so.moradi@ilam.ac.ir}
		
			\address{Ayesha Asloob Qureshi, Sabanci University, Faculty of Engineering and Natural Sciences, Orta Mahalle, Tuzla 34956, Istanbul, Turkey}
	
		\email{aqureshi@sabanciuniv.edu}
		
		\dedicatory{ }
	\keywords{toric rings, discrete polymatroids, class group, canonical module}
	\subjclass[2010]{Primary 13A02; 13P10, Secondary 05E40}
	\thanks{Takayuki Hibi is partially supported by JSPS KAKENHI 19H00637. Somayeh Moradi is supported by the Alexander von Humboldt Foundation. Ayesha Asloob Qureshi is supported by The Scientific and Technological Research Council of Turkey - T\"UBITAK (Grant No: 122F128).}
		
\begin{abstract}
In this paper we introduce  toric rings of multicomplexes.  We show how to compute the  divisor class group and the class of the canonical module when the toric ring is normal.  In the  special case that the multicomplex is a discrete polymatroid,  its toric  ring is studied deeply  for several classes of polymatroids. 
\end{abstract}

		\maketitle
		
		\setcounter{tocdepth}{1}
\section*{Introduction}	

In the previous paper \cite{HHMQ} the authors introduced toric rings of simplicial complexes. In this paper we extend this concept to multicomplexes with a special focus on polymatroids. 
	
In Section 1 we recall the relevant terminology and background. 
In Section 2 we present the general  framework of toric rings of multicomplexes and show how to compute the divisor class group and the class of the canonical module when the toric ring is normal.   The methods for the proofs are similar to those used in \cite {HHMQ}.  It turns out that their divisor class group  is an abelian group with just one relation.

In Section 3 the results of Section 2 are made more explicit when the multicomplex is a discrete polymatroid. There has been many research works on discrete polymatroids and their base rings. We refer the reader to \cite{HH, HHV, Lu, Sch, V} and the references therein.  In this paper we determine the divisor class group and the canonical class of toric rings of discrete polymatroids, and as an application  we recover the Gorenstein criterion discussed in \cite[Example 7.4(b)]{HH}.

Section 4 deals with transversal polymatroids. As one of the main results it is shown  in Corollary~\ref{n_and_d} that for any integer $r\geq 1$ and any integer $d\geq 0$, there exists a transversal polymatroid for which the  divisor class group of its toric ring is isomorphic to $\ZZ^{r-1}\dirsum \ZZ/d\ZZ$. In Theorem~\ref{finite} those  transversal polymatroids are characterized for which the divisor class group of its toric ring is a finite cyclic group, and in Theorem~\ref{2_prime_THEOREM} those for which the divisor class group  is isomorphic to $\ZZ\dirsum \ZZ/d\ZZ$. Other families of transversal polymatroids are also considered. 

In the last section we classify all discrete polymatroids of Veronese type whose toric ring is Gorenstein. For the  base ring of  discrete polymatroids of Veronese  type this classification was achieved in \cite{DH}.

\section{Preliminaries and background}\label{one}

In this section we recall the basic concepts which are relevant for this paper.  We denote the set of non-negative integers by $\ZZ_+$ and the set of non-negative real numbers by $\RR_+$.  For two vectors $\ub=(u_1,\ldots,u_n)$ and $\vb=(v_1,\ldots,v_n)$ in $\RR_+^n$ we write $\ub\leq \vb$ if $u_i\leq v_i$ for all $i$. Moreover, write $\ub<\vb$ if  $\ub\leq \vb$  and  $\ub\neq \vb$.  Let $\eb_1,\ldots,\eb_n$ be the standard basis of $\ZZ^n$.  A {\em multicomplex} on the ground set $[n]$ is a nonempty  finite set  $\mathcal{M}\subset \ZZ_+^n$ such that 

\begin{enumerate}
	\item  for any $\vb\in \mathcal{M}$ and $\ub\in \ZZ_+^n$ with $\ub\leq \vb$, one has $\ub\in \mathcal{M}$. 
	\item   $\eb_i\in \mathcal{M}$ for any $1\leq i\leq n$.   
\end{enumerate} 

Note that a simplicial complex on $[n]$ is in fact a  multicomplex consisting of $0,1$-vectors.

Let $\mathcal{M}$ be a multicomplex on $[n]$. A vector $\vb\in \mathcal{M}$ is called a {\em facet} of $\mathcal{M}$ if there exists no $\wb\in \mathcal{M}$ with $\vb<\wb$. The set of  all facets of $\mathcal{M}$ is denoted by $\mathcal{F}(\mathcal{M})$. If $\Fc(\mathcal{M})=\{\vb_1,\ldots,\vb_m\}$, then we write $\mathcal{M}=\langle \vb_1,\ldots,\vb_m\rangle$. Let  $K$ be a field. For a vector $\vb=(v_1,\ldots, v_n)\in \mathcal{M}$, we define the monomial  $\xb^\vb=\prod_{i=1}^n x_i^{v_i}$ in the polynomial ring $K[x_1,\ldots,x_n]$. The {\em toric ring of $\mathcal{M}$} is defined to be  the subalgebra 
\[
R_{\mathcal{M}}=K[\xb^\vb t\: \vb\in \mathcal{M}],
\]
of the polynomial ring $S=K[x_1,\ldots,x_n,t]$. The algebra $R_\mathcal{M}$	
has a $K$-basis consisting  of monomials of $S$. If $f=x_1^{a_1}\cdots x_n^{a_n}t^k$ belongs to $R_\mathcal{M}$, we set $\deg f=k$. By this grading  $R_\mathcal{M}$ is a standard graded $K$-algebra.

Any monomial $\xb^{\vb}t^b\in K[x_1,\ldots,x_n,t]$ can be identified  with its exponent vector $(\vb,b)\in \ZZ^{n+1}$. Then the monomial $K$-basis of $R_\mathcal{M}$ corresponds to an affine semigroup $S\subset \ZZ^{n+1}$ which is generated by the  lattice points  $p_\vb=\sum_{i=1}^n v_i\eb_i+\eb_{n+1}$ in $\ZZ^{n+1}$, where $\vb\in \mathcal{M}$. 

Let $\ZZ S$ be the smallest subgroup of $\ZZ^{n+1}$  containing $S$ and let $\RR_+ S\subset \RR^{n+1}$ be the smallest cone containing $S$.  In our case, $\ZZ S=\ZZ^{n+1}$. Since we assume $R$ is normal,  Gordon's lemma \cite[Proposition 6.1.2]{BH} guarantees that $S=\ZZ^{n+1}\sect \RR_+ S$. 


By \cite[Corollary 4.35]{BG} all minimal prime ideals of a monomial ideal in $R_\mathcal{M}$ are monomial prime ideals. In particular, they are generated by subsets of the generators $\xb^\vb t$ of $R_\mathcal{M}$. Moreover,  it follows from  \cite[Proposition 2.36 and Proposition 4.33]{BG} that  $P\subset R_\mathcal{M}$ is a monomial prime ideal of $R_\mathcal{M}$  if and only if there exists a face  $\F$ of $\RR_+ S$ such that $P =(x^\vb t\: p_\vb\not\in \F)$. In other words, $P$ is a monomial prime ideal if and only if there exists a supporting hyperplane $H$ of $ \RR_+ S$ such  that
\[
P=(x^\vb t\: \vb\in \mathcal{M} \text{ and } f(p_\vb)>0),
\]
where $f$ is a linear form defining $H$.

The supporting hyperplane $H$ of a facet is uniquely determined. Since $H$ is spanned by lattice points, a linear form $f=\sum_{i=1}^{n+1}c_ix_i$ defining $H$ has rational coefficients. By clearing denominators we may assume that all $c_i$ are integers, and then dividing $f$ by the greatest common divisor of the $c_i$, we may furthermore assume that $\gcd(c_1,\ldots,c_{n+1})=1$. Then this normalized linear form $f$ is uniquely determined by $H$. It has the property that $f(H)=0$ and $f(\ZZ^{n+1})\sect \ZZ_+=\ZZ_+$. Indeed, since $\gcd(c_1,\ldots,c_{n+1})=1$, there exist $p= (b_1,\ldots,b_{n+1})\in \ZZ^{n+1}$ with $\sum_{i=1}^{n+1}b_ic_i=1$, which implies that $f(p)=1$.

If $P$ is  a height $1$ monomial prime ideal, then  $P=(x^\vb t\: p_\vb\not\in \F)$,  where $\F$ is a facet of $\RR_+S$. Let $H$ be the supporting hyperplane of $\F$. Then we  call the normalized linear form which defines $H$, the {\em support form} associated  to $P$.

For  a vector $\vb=(v_1,\ldots,v_n)\in \ZZ^n$ we set $|\vb|=\sum_{i=1}^nv_i$,  and for a subset $A\subset [n]$ we set $\vb(A)=\sum_{i\in A}v_i$.  In particular, $\vb(\emptyset)=0$ and $\vb([n])=|\vb|$. 

A {\em discrete polymatroid} on the ground set $[n]$ is a  multicomplex $\P$ satisfying the following property: if $\vb=(v_1,\ldots,v_n)$ and $w=(w_1,\ldots,w_n)$ belong to $\P$ and $|\vb|<|\wb|$, then there exists $i\in [n]$ with $v_i<w_i$ such that $\vb+\eb_i\in\P$. 

The {\em ground set rank function} $\rho\: 2 ^{[n]}\to \ZZ_+$ of a discrete polymatroid $\P$ is defined to be
\[
\rho(A)=\max\{ \vb(A)\: \vb\in \P\}.
\]
The ground set rank function $\rho$ of $\P$ has the following properties:
\begin{enumerate}
	\item[(i)] $\rho(A)\leq \rho(B)$, if $A\subseteq B\subseteq [n]$, and
	\item[(ii)] $\rho(A)+\rho(B)\geq \rho(A\union B)+\rho(A\sect B)$ for all $A, B\subset [n]$.
\end{enumerate}

A nonempty set $A\subseteq [n]$ is called {\em $\rho$-closed}, if $\rho(A)<\rho(B)$ for all $B\subseteq [n]$ properly containing $A$, and $A$ is called {\em separable}, if there exist nonempty subset $A_1$ and $A_2$ of $A$ with $A=A_1\union A_2$ and $A_1\sect A_2=\emptyset$   such that $\rho(A)=\rho(A_1)+\rho(A_2)$.  The set $A$ is  called {\em inseparable} if it is not separable.

\section{The toric  face ring of a multicomplex}\label{two}
 In this section we study the class group of normal toric rings of multicomplexes. 
 The following theorem generalizes \cite[Theorem 1.1]{HHMQ}
with exactly the same argument. 

\begin{Theorem}
	\label{classgroup}
	Let $\mathcal{M}$ be a multicomplex on $[n]$ such that $R=R_\mathcal{M}$ is normal, and let $P_1,\ldots,P_r$ be the minimal prime ideals of $(t)$, where $t\in R$ is the element corresponding to the zero vector in $\mathcal{M}$. Then $\Cl(R)$ is generated by the classes $[P_i]$, $i=1,\ldots,r$. Since $R_{P_i}$ is a discrete valuation ring, we have $tR_{P_i}=P_i^{a_i}R_{P_i}$ with $a_i\in \ZZ$ for $i=1,\ldots,r$. Then  $\sum_{i=1}^ra_i[P_i]=0$ is the only generating relation among these generators of $\Cl(R)$. 
\end{Theorem}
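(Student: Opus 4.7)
The plan is to follow the localization argument used in \cite[Theorem 1.1]{HHMQ}, which adapts verbatim to multicomplexes because the key structural features (normality and the presence of $t$ as a generator) still hold.

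First I would establish that $R[t^{-1}]$ is a Laurent polynomial ring. Since the multicomplex axioms guarantee $\eb_i\in\Mc$ for every $i$, each $x_it$ is a generator of $R$, so $x_i=(x_it)/t$ lies in $R[t^{-1}]$. Together with $t^{\pm 1}$ this gives $K[x_1,\ldots,x_n,t^{\pm 1}]\subseteq R[t^{-1}]$, and the reverse inclusion is obvious because every generator $\xb^{\vb}t$ of $R$ lies in the Laurent polynomial ring. Hence
\[
R[t^{-1}]=K[x_1,\ldots,x_n,t^{\pm 1}],
\]
which is a UFD, so $\Cl(R[t^{-1}])=0$.

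Next I would invoke Nagata's localization sequence for divisor class groups of Krull domains: for the multiplicative set generated by $t$, one has an exact sequence
\[
\bigoplus_{P\supset (t),\,\height P=1}\ZZ\,[P] \;\longrightarrow\; \Cl(R) \;\longrightarrow\; \Cl(R[t^{-1}])\;\longrightarrow\;0.
\]
The height one primes containing $t$ are precisely the minimal primes $P_1,\ldots,P_r$ of $(t)$, and since $\Cl(R[t^{-1}])=0$ this shows $\Cl(R)$ is generated by $[P_1],\ldots,[P_r]$, giving the first assertion of the theorem.

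For the relation, I would analyze the kernel of the map above. An element $(b_1,\ldots,b_r)\in\ZZ^r$ is in the kernel iff $\sum b_i[P_i]=\div(g)$ for some $g$ in the field of fractions of $R$. Such a $g$ has trivial valuation at every height one prime not containing $t$, so $g$ becomes a unit in $R[t^{-1}]$. Since the units of $K[x_1,\ldots,x_n,t^{\pm 1}]$ are exactly $K^{\times}\cdot t^{\ZZ}$, we must have $g=ct^{k}$ for some $c\in K^{\times}$ and $k\in\ZZ$. Therefore $\div(g)=k\,\div(t)=k\sum_{i=1}^{r}a_i[P_i]$, so the kernel is the cyclic subgroup generated by $(a_1,\ldots,a_r)$. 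This yields exactly the stated single relation $\sum a_i[P_i]=0$.

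The only step that requires genuine care is the identification of units in $R[t^{-1}]$; this is where one must use that after inverting $t$ the ring becomes an honest Laurent polynomial ring (not merely a UFD with more complicated units). The unit computation is elementary but is what produces the uniqueness of the defining relation. Everything else is a direct application of the Krull-domain localization formalism and is insensitive to whether the multicomplex is simplicial, so no new ideas beyond \cite{HHMQ} are needed.
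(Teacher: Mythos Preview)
Your argument is correct and is exactly the approach the paper intends: the paper gives no independent proof but simply states that the argument of \cite[Theorem~1.1]{HHMQ} carries over verbatim, and your write-up spells out precisely that argument (Nagata localization after observing $R[t^{-1}]=K[x_1,\ldots,x_n,t^{\pm1}]$, then reading off the kernel from the unit group). Nothing further is needed.
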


The following lemma and proposition are needed for studying the canonical class of $R_\mathcal{M}$. 

\begin{Lemma}(see \cite[Lemma 1.6]{HHMQ})
	\label{vr}
	Let $\mathcal{M}$ be a multicomplex on $[n]$ such that $R=R_\mathcal{M}$ is normal, and  let $P$ be a monomial  prime ideal of $R$ of height one. Furthermore,  let $f$ be the support form  associated with $P$, and let $\vb_f=(c_1,\ldots,c_{n+1})$ be the coefficient vector of $f$. Then  the following holds:
	
	If $u\in Q(R)$ is a monomial with exponent vector $\vb_u$, then $uR_P=P^aR_P$, where 
	$
	a=\langle \vb_f,\vb_u\rangle.
	$
	Here $\langle -,-\rangle$ denotes the standard inner product in  $\RR^{n+1}$. 
\end{Lemma}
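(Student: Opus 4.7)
The plan is to identify the unique discrete valuation $v_P$ of $Q(R)$ attached to the DVR $R_P$ with the ``monomial valuation'' $\nu_f$ coming from the linear form $f$, and then to read off the exponent $a$ directly from $\langle\vb_f,\vb_u\rangle$. Since $\ZZ S=\ZZ^{n+1}$, the quotient field $Q(R)$ coincides with the quotient field of the Laurent polynomial ring $K[\ZZ^{n+1}]$, so every $u\in Q(R)$ with the form of a Laurent monomial is meaningful.

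First I would introduce $\nu_f$: for a nonzero Laurent polynomial $g=\sum_{\vb}a_\vb x^\vb$ in $K[\ZZ^{n+1}]$, set
\[
\nu_f(g)=\min\{\,f(\vb)\: a_\vb\neq 0\,\},
\]
and extend to $Q(R)^\ast$ by $\nu_f(g/h)=\nu_f(g)-\nu_f(h)$. A short argument, reducing leading parts in the $\ZZ$-grading on $K[\ZZ^{n+1}]$ by $f$, shows that the leading parts cannot cancel in a product (they live in a Laurent polynomial subring over $\ker f$, which is a domain), so $\nu_f$ is multiplicative; the non-archimedean inequality is immediate. Hence $\nu_f$ is a discrete valuation of $Q(R)$. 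On a Laurent monomial $u$ one has $\nu_f(u)=\langle\vb_f,\vb_u\rangle$ by construction.

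Next I would check two compatibilities of $\nu_f$ with $R$ and $P$. Nonnegativity $\nu_f|_R\geq 0$ follows because every generator $x^\vb t$ of $R$ has exponent vector $p_\vb\in\RR_+S$ and $f$ is nonnegative on $\RR_+S$ by definition of the supporting hyperplane $H$; multiplicativity and the non-archimedean inequality propagate this to all of $R$. Moreover, the center of $\nu_f$ on $R$ (the monomials in $R$ with strictly positive $\nu_f$) equals the set $\{x^\vb t : f(p_\vb)>0\}$, which generates exactly $P$ by its description in the text. Thus $\nu_f$ extends to a valuation of $Q(R)$ whose valuation ring contains $R_P$ and whose maximal ideal contracts to $PR_P$, so it dominates $R_P$. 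Since $R_P$ is itself a DVR, any such dominating valuation must be a positive integer multiple of $v_P$; hence $\nu_f=m\,v_P$ for some $m\in\ZZ_{>0}$.

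Finally, the normalization $\gcd(c_1,\dots,c_{n+1})=1$ yields $p\in\ZZ^{n+1}$ with $f(p)=1$, and the corresponding Laurent monomial $u_0\in Q(R)$ satisfies $\nu_f(u_0)=1$; thus $v_P(u_0)=1/m\in\ZZ_{>0}$, forcing $m=1$. In particular, for the monomial $u$ of the statement,
\[
v_P(u)=\nu_f(u)=\langle\vb_f,\vb_u\rangle=a,
\]
which is exactly the assertion $uR_P=P^aR_P$. The only subtle point is the verification that $\nu_f$ is multiplicative on the Laurent polynomial ring (i.e.\ that leading forms with respect to $f$ cannot annihilate one another); this is a routine but essential observation that relies on $f$ being a group homomorphism $\ZZ^{n+1}\to\ZZ$ rather than merely a monomial order. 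Every other step is formal.
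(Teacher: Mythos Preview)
The paper does not supply its own proof of this lemma; it merely cites \cite[Lemma~1.6]{HHMQ} and moves on. So there is no in-paper argument to compare against.

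Your argument is correct and is the standard one for this situation (and almost certainly coincides with what is in \cite{HHMQ}): build the monomial valuation $\nu_f$, verify it is a discrete valuation with center $P$ on $R$ so that it dominates the DVR $R_P$, conclude $\nu_f=m\,v_P$, and use the $\gcd$ normalization of $f$ to force $m=1$. The one step you flag as ``subtle'' --- that leading forms with respect to $f$ lie in the Laurent polynomial ring $K[\ker f]$, a domain, so they cannot cancel in a product --- is exactly the right observation, and the rest is indeed formal. A tiny streamlining of your last step: since both $v_P$ and $\nu_f$ are $\ZZ$-valued, surjective onto $\ZZ$ (the latter via $u_0$), and share the same valuation ring, they are equal outright; no need to pass through $1/m\in\ZZ$.
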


\begin{Proposition}\label{othermin}
	Let $\mathcal{M}$ be a  multicomplex on the ground set $[n]$. For $i=1, \ldots, n$, let $Q_i=(\xb^\vb t \: \vb\in\mathcal{M},\ v_i>0)$.  Then $\{Q_1, \ldots, Q_n\}$ is the set of height one monomial prime ideals of $R_\mathcal{M}$ which do not contain $t$.
\end{Proposition}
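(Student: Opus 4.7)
The plan is to use the facet-hyperplane correspondence recalled just before Theorem~\ref{classgroup}: every height one monomial prime $P$ of $R_\mathcal{M}$ has the form $P=(\xb^\vb t\colon \vb\in\mathcal{M},\ f(p_\vb)>0)$ for a unique normalized support form $f$ of some facet $\F$ of $\RR_+S$, and via the identification $t\leftrightarrow p_{\mathbf{0}}=\eb_{n+1}$ the condition $t\notin P$ becomes $f(\eb_{n+1})=0$. So my task is to match the normalized support forms vanishing on $\eb_{n+1}$ with the coordinate functionals $x_1,\dots,x_n$.

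First I would check that each coordinate functional $f=x_i$ is a normalized support form producing $Q_i$. It is non-negative on $\RR_+S$ since every generator $p_\vb=\vb+\eb_{n+1}$ has $v_i\geq 0$, and it is already primitive. The face $\F_i=\RR_+S\cap\{x_i=0\}$ contains $p_{\mathbf{0}}=\eb_{n+1}$ together with the points $p_{\eb_j}=\eb_j+\eb_{n+1}$ for $j\neq i$, available thanks to axiom (2) of a multicomplex; these $n$ vectors are linearly independent and span $\{x_i=0\}$, so $\F_i$ has codimension one in the full-dimensional cone $\RR_+S$ and is therefore a facet. The associated prime is $(\xb^\vb t\colon v_i>0)=Q_i$, and $t\notin Q_i$ because $\eb_{n+1}\in\F_i$.

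The main content is the converse: an arbitrary height one monomial prime $P$ with $t\notin P$ equals some $Q_i$. Writing its normalized support form as $f=\sum_{j=1}^{n+1}c_jx_j$, evaluation at $\eb_{n+1}$ gives $c_{n+1}=0$, while non-negativity of $f$ on each $p_{\eb_j}$ forces $c_j\geq 0$ for $j=1,\dots,n$. Setting $I=\{j\in[n]\colon c_j>0\}$, a generator $p_\vb$ lies on $\F$ precisely when $v_j=0$ for every $j\in I$, so $\F$ sits inside the linear span of $\{\eb_j\colon j\notin I\}\cup\{\eb_{n+1}\}$, whose dimension is $n-|I|+1$. Since $\F$ is a facet, its dimension must be $n$, so $|I|\leq 1$; and $f\neq 0$ gives $|I|=1$. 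Hence $f=c_ix_i$, and the normalization $\gcd(c_1,\dots,c_{n+1})=1$ forces $c_i=1$, so $P=Q_i$.

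The crux is the dimension count that pins down $|I|=1$. Everything hinges on the fact that axiom (2) of a multicomplex supplies the generators $p_{\eb_j}$, which simultaneously impose the non-negativity constraints $c_j\geq 0$ on the support form and make the upper bound $\dim\F\leq n-|I|+1$ tight enough to rule out all but the coordinate forms.
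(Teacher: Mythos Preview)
Your proof is correct. The first direction, showing that each $Q_i$ arises from the coordinate form $x_i$ via the $n$ independent generators $p_{\mathbf{0}},p_{\eb_j}\ (j\neq i)$, is exactly how the paper argues it.

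For the converse, however, you take a genuinely different route. The paper proceeds algebraically: given a height one monomial prime $P$ with $t\notin P$, it first shows that some $x_it\in P$ by choosing a minimal $\vb\neq \mathbf{0}$ with $\xb^\vb t\in P$ and factoring $(\xb^{\vb-\eb_j}t)(x_jt)=(\xb^\vb t)\,t$ to contradict primality; then, once $x_it\in P$, another factorization $(\xb^\vb t)\,t=(x_it)(\xb^{\vb-\eb_i}t)$ yields $Q_i\subseteq P$ and hence equality by height. These factorizations rely on the downward closure axiom~(1) to ensure the factors live in $R_{\mathcal{M}}$. Your argument instead stays entirely on the geometric side: you read off $c_{n+1}=0$ from $t\notin P$, pin down $c_j\geq 0$ by evaluating on the generators $p_{\eb_j}$ supplied by axiom~(2), and then bound $\dim\F\leq n-|I|+1$ to force $|I|=1$. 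The paper's approach is more self-contained ring-theoretically and does not re-invoke the facet--hyperplane correspondence, while yours makes the role of the coordinate hyperplanes completely transparent and avoids any minimality or factorization tricks.
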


\begin{proof}
For $i=1, \ldots, n$, let $f_i(\xb)=x_i$ and $H_i=\{\xb \: f_i(\xb)=0\}$. We claim that $H_i$ is a supporting hyperplane of a facet of $\RR_+S$.  For all $\vb \in \mathcal{M}$, we have $f_i(p_\vb)=v_i \geq 0$. Hence $H_i$ is a supporting hyperplane, and the points $\eb_1, \ldots, \widehat{\eb_i}, \ldots, \eb_{n+1}$ lie on the hyperplane. This shows that $H_i$ is a supporting hyperplane of a facet of $\RR_+S$. Hence $P=(\xb^\vb t \: \vb\in\mathcal{M},\ f_i(p_\vb) >0)$ is a height one monomial prime ideal of $R_\mathcal{M}$ and $P=Q_i$.
	
Now, let $P$ be a height one monomial prime ideal of $R_\mathcal{M}$ with $t \notin P$. First we claim that $x_it \in P$, for some $i$. 
Suppose that this is not the case. Then there exists a nonzero $\vb\in \mathcal{M}$ such that $\xb^\vb t\in P$ and for any $\ub<\vb$, $\xb^\ub t\notin P$. 
Let $j$ be an integer with $v_j>0$ and let $\wb=\vb-\eb_j$. Then 
  $(x^\wb t)(x_jt)=(\xb^\vb t)(t) \in P$, while  $x^\wb t \notin P$ and $x_jt\notin P$, a contradiction. So the claim is proved and there exists $i$ such that $x_it \in P$.    
 Let $\vb\in\mathcal{M}$ with $v_i>0$.     Then $(\xb^\vb t)(t)=(x_it)(\xb^{\vb-e_i}t) \in P$. Since $t \notin P$, we have $\xb^\vb t\in P$. This shows that $Q_i \subseteq P$. Since $P$ and $Q_i$ both have height one, we obtain $P=Q_i$. 
\end{proof} 

Let $\omega_{R_{\mathcal{M}}}$ be the canonical module of $R_{\mathcal{M}}$. 
By \cite[Corollary~3.3.19]{BH}, $\omega_{R_{\Mc}}$ is a divisorial ideal and corresponds to the relative interior of the cone $\RR_+S$, see \cite[Theorem~6.3.5(b)]{BH}.
Let $P_1.\ldots,P_r$ be the  height one monomial prime ideals of $R_{\mathcal{M}}$ which contain $t$, and for each $j$ let $f_j$ be the support form  associated with $P_j$. Let $\vb_{f_j}=(c_{1,j},\ldots,c_{n+1,j})$ be the coefficient vector of $f_j$. Then by Proposition~\ref{othermin}, $P_1, \ldots, P_r, Q_1, \ldots, Q_n$ are the sets of all height one monomial prime ideals of $R_{\mathcal{M}}$. 
By \cite[Theorem~6.3.5(b)]{BH} we have  
\begin{equation}\label{eq:intersection}
	\omega_{R_{\Mc}}=( \bigcap_{i=1}^rP_i) \cap (\bigcap_{j=1}^n Q_j).
\end{equation} 

\begin{Theorem}\label{verygood}
	Let	$\mathcal{M}$ be a multicomplex on the ground set $[n]$ such that $R_{\mathcal{M}}$ is normal. Then with the notation introduced above, we have 
	\[
	[\omega_{R_{\Mc}}]= \sum_{j=1}^r (1-c_{1,j}-\cdots-c_{n,j}) [P_j].
	\]
\end{Theorem}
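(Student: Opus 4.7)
The plan is to combine the intersection formula \eqref{eq:intersection} for $\omega_{R_{\Mc}}$ with the principal divisor relations coming from the variables $x_1,\dots,x_n$, and then read off the divisor class. First I would exploit the fact that a divisorial ideal $I$ written as an intersection $\bigcap_P P$ of distinct height one primes has class $[I]=\sum_P[P]$ in $\Cl(R_{\Mc})$. Applied to \eqref{eq:intersection}, this immediately gives
\[
[\omega_{R_{\Mc}}]=\sum_{i=1}^r[P_i]+\sum_{j=1}^n[Q_j].
\]
So the task reduces to expressing each $[Q_j]$ as an integer combination of $[P_1],\dots,[P_r]$.

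To do this I would use the principal element $x_k\in Q(R_{\Mc})^{*}$ (note $x_k=(x_kt)/t$ is an honest element of the quotient field, as $x_kt\in R_{\Mc}$). Since $\div(x_k)$ is principal, its image in $\Cl(R_{\Mc})$ vanishes, so
\[
\sum_{i=1}^r v_{P_i}(x_k)\,[P_i]+\sum_{j=1}^n v_{Q_j}(x_k)\,[Q_j]=0,
\]
where the sum runs over all height one monomial primes (these are the only primes contributing, because $x_k$ is a monomial of $Q(R_{\Mc})$ and all height one primes of a normal semigroup ring are monomial). From Proposition~\ref{othermin}, the support form of $Q_j$ is $x_j$, hence its coefficient vector is $\eb_j\in\ZZ^{n+1}$. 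Plugging the exponent vector $\eb_k$ of $x_k$ into Lemma~\ref{vr} then yields
\[
v_{P_i}(x_k)=\langle\vb_{f_i},\eb_k\rangle=c_{k,i},\qquad v_{Q_j}(x_k)=\langle\eb_j,\eb_k\rangle=\delta_{jk}.
\]
Substituting these into the principal divisor relation above gives
\[
[Q_k]=-\sum_{i=1}^r c_{k,i}\,[P_i]\qquad\text{for each } k=1,\dots,n.
\]

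Combining this with the first display, I obtain
\[
[\omega_{R_{\Mc}}]=\sum_{i=1}^r[P_i]-\sum_{k=1}^n\sum_{i=1}^r c_{k,i}\,[P_i]=\sum_{j=1}^r\bigl(1-c_{1,j}-\cdots-c_{n,j}\bigr)[P_j],
\]
which is exactly the claimed formula. The only nontrivial step is justifying the identification of the support form of $Q_j$ with $x_j$, but this is read off directly from the construction of $Q_j$ in Proposition~\ref{othermin}; once that is in place, the rest is a bookkeeping argument and the real work has already been done in Lemma~\ref{vr} and in the intersection formula \eqref{eq:intersection}.
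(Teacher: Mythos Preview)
Your proof is correct and follows essentially the same route as the paper: you pass from the intersection formula \eqref{eq:intersection} to $[\omega_{R_{\Mc}}]=\sum_i[P_i]+\sum_j[Q_j]$, then use the principal divisor of $x_k$ together with Lemma~\ref{vr} and the identification of the support form of $Q_j$ from Proposition~\ref{othermin} to rewrite each $[Q_k]$ in terms of the $[P_i]$. The paper does exactly this, with the same computations of $\langle\vb_{f_j},\eb_i\rangle=c_{i,j}$ and $\langle\eb_j,\eb_i\rangle=\delta_{ij}$.
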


\begin{proof}
	To simplify the notation, we set $R=R_\mathcal{M}$. It follows from (\ref{eq:intersection}) that $[\omega_{R}]= \sum_{j=1}^r  [P_j]+\sum_{k=1}^n  [Q_k]$. 
For a fixed integer $1\leq i\leq n$, let 
$[x_i]= \sum_{j=1}^r a_j [P_j]+\sum_{k=1}^n b_k [Q_k]$. By \cite[Lemma~1.6]{HHMQ} we have $
a_j =\langle \vb_{f_j},\eb_i\rangle=c_{i,j}$, where $\eb_i\in \ZZ^{n+1}$ is a standard basis element. 
For each $1\leq j\leq n$, the associated support form of $Q_j$ is $f'_j(x)=x_j$ (see Proposition~\ref{othermin}). Hence $\vb_{f'_j}=e_j$ and
\[
b_j =\langle \eb_j,\eb_i\rangle= \left\{
\begin{array}{ll}
	1,  &   \text{if $j=i$,}\\
	0, & \text{otherwise.}\\
\end{array}
\right. \]
Hence
	\[
	[x_i] = (\sum_{j=1}^r c_{i,j}[P_j])+[Q_i].
	\]
from which we conclude that $[Q_i]=-\sum_{j=1}^r c_{i,j}[P_j]$. This implies the desired equality.
\end{proof}

Remark. Theorem~\ref{classgroup} holds true when we replace $\mathcal{M}$ by any finite set of vectors in $\ZZ_+^n$ which includes $e_1,\ldots,e_n$ and the zero vector.

\section{The class group and the canonical class of the toric ring of a discrete polymatroid}

Discrete polymatroids are special, but important classes of multicomplexes. In this section we apply the results of Section~\ref{two} to obtain the class group and the canonical class of discrete polymatroids.

Discrete polymatroids are particularly nice multicomplexes. Indeed, by \cite{E}  one has  

\begin{Theorem}[Edmonds]
\label{normalnice}
Let  $\P$ be a discrete polymatroid.  Then $R_\P$ is normal. 
\end{Theorem}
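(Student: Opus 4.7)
The plan is to translate normality of $R_\P$, via Gordan's lemma, into the integer decomposition property of the independence polytope $\Pc(\P)=\conv\P$, and then to obtain the decomposition by a greedy/exchange construction going back to Edmonds. Because $\P$ contains $\mathbf 0$ and each unit vector $\eb_i$, the group spanned by the semigroup $S=\ZZ_+\langle p_\vb:\vb\in\P\rangle$ is all of $\ZZ^{n+1}$, and Gordan's lemma then says that $R_\P$ is normal precisely when $S=\ZZ^{n+1}\cap\RR_+S$. A point $(\vb,k)\in\ZZ^{n+1}$ lies in $\RR_+S$ exactly when $\vb$ is a lattice point of $k\Pc(\P)$, so normality is equivalent to the assertion that every such lattice point can be written as a sum of $k$ elements of $\P$.

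To make this concrete I would first recall the polyhedral description of a discrete polymatroid: there is a nondecreasing, integer-valued, submodular function $\rho:2^{[n]}\to\ZZ_+$ with $\rho(\emptyset)=0$ such that
\[
\P=\{\vb\in\ZZ_+^n:\vb(A)\leq\rho(A)\text{ for every }A\subseteq[n]\},
\]
and $\Pc(\P)$ is the polytope defined over $\RR_+^n$ by the same linear inequalities. Consequently, the lattice points of $k\Pc(\P)$ are exactly the $\vb\in\ZZ_+^n$ with $\vb(A)\leq k\rho(A)$ for every $A\subseteq[n]$.

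I would then argue by induction on $k$. The base case $k=1$ is immediate from the description above. The inductive step reduces to producing a single $\ub\in\P$ satisfying $\ub\leq\vb$ componentwise and $(\vb-\ub)(A)\leq(k-1)\rho(A)$ for every $A$; once such an $\ub$ is found, $\vb-\ub$ is a lattice point of $(k-1)\Pc(\P)$, and the induction hypothesis supplies the remaining $k-1$ summands.

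Constructing this $\ub$ is the main obstacle. I would build it greedily: fix any ordering of $[n]$ and at step $i$ choose $u_i$ as large as possible with $0\leq u_i\leq v_i$ and $(u_1,\ldots,u_i,0,\ldots,0)\in\P$. The submodularity of $\rho$, via a standard exchange argument, then forces the resulting $\ub$ to be maximal in $\P$ below $\vb$ and, crucially, to satisfy $\ub(A)=\rho(A)$ on every \emph{tight} set $A$ with $\vb(A)=k\rho(A)$. On tight $A$ the required inequality $(\vb-\ub)(A)\leq(k-1)\rho(A)$ becomes an equality, and on non-tight $A$ it follows from $\vb(A)\leq k\rho(A)-1$ together with $\ub(A)\geq 0$. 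This greedy/exchange step, which is exactly Edmonds' polymatroid argument, closes the induction and thereby establishes the normality of $R_\P$.
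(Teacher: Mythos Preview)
The paper does not supply a proof of this theorem; it simply attributes the result to Edmonds and cites \cite{E}. So there is nothing in the paper to compare against, and your proposal must stand on its own. The overall architecture---reduce normality to the integer decomposition property of the independence polytope $\Pc(\P)$ and prove IDP by induction on $k$, peeling off one summand $\ub\in\P$---is the right one and is indeed what underlies Edmonds' result. But the inductive step as you wrote it has two real gaps.

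First, the greedy vector $\ub$ built from an \emph{arbitrary} ordering need not satisfy $\ub(A)=\rho(A)$ on every tight set. Take the uniform matroid $U_{2,3}$ (so $\rho(\{i\})=1$ and $\rho(A)=2$ for $|A|\geq 2$), $k=2$, and $\vb=(2,1,1)$; then $\{1\}$ is tight. Greedy in the order $2,3,1$ gives $\ub=(0,1,1)$, hence $\ub(\{1\})=0\neq 1$, and $\vb-\ub=(2,0,0)\notin\Pc(\P)$. So ``fix any ordering'' is false; the ordering (or the choice of $\ub$) must depend on $\vb$, and you have not said how. Second, your non-tight argument is arithmetically wrong: from $\vb(A)\leq k\rho(A)-1$ and $\ub(A)\geq 0$ you only obtain $(\vb-\ub)(A)\leq k\rho(A)-1$, and $k\rho(A)-1\leq(k-1)\rho(A)$ fails as soon as $\rho(A)\geq 2$. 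What you actually need is $\ub(A)\geq \vb(A)-(k-1)\rho(A)$ for \emph{every} $A$, tight or not, and this requires a genuine submodularity argument (or an appeal to the polymatroid sum/intersection theorem, which gives directly that the integer points of $k\Pc(\P)$ are $k$-fold sums of integer points of $\Pc(\P)$). Until that step is supplied, the induction does not close.
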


\medskip
Let $\P$ be a discrete polymatroid, and let $S\subset \ZZ^{n+1}$ be the affine semigroup which is generated by the lattice points corresponding to the generators $x^\vb t$  of $R_\P$. We need to determine the hyperplanes defining  the facets of the cone $\RR_+S$. 

For each $A\subseteq [n]$ which is  $\rho$-closed and $\rho$-inseparable, we consider the hyperplane $H_A$ defined by the linear form 
\begin{eqnarray}\label{edmondsform}
f_A(x)=-\sum_{i\in A}x_i+\rho(A)x_{n+1},
\end{eqnarray}
and for $i=1,\ldots,n$, let  $H_i$ be the hyperplane defined by the linear form $f_i(x)=x_i$.

The following result is crucial for our considerations 

\begin{Theorem}[\cite{E}]
\label{crucial}
The hyperplanes $H_A$ and the hyperplanes $H_i$ introduced above are the supporting hyperplanes of the facets of the cone $\RR_+S$ attached to the polymatroid $\P$. 
\end{Theorem}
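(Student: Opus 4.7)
The plan is to reduce the assertion to Edmonds' classical description of the facets of the independence polytope
\[
Q := \conv\{\vb : \vb \in \P\} \subseteq \RR^n_+,
\]
combined with the standard coning correspondence between a polytope and its height-one cone.

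First, I would identify $\RR_+S$ as the cone over $Q$: every generator $p_\vb=(\vb,1)$ of $S$ lies in the hyperplane $\{x_{n+1}=1\}$ and $\vb$ ranges over all of $\P$, so
\[
\RR_+S \;=\; \{\lambda(\yb,1) : \lambda\geq 0,\ \yb\in Q\}.
\]
Since $\P$ contains $\eb_1,\ldots,\eb_n$ and the zero vector, $Q$ is full-dimensional in $\RR^n$, whence $\RR_+S$ is full-dimensional in $\RR^{n+1}$. Under the coning construction, facets of $\RR_+S$ correspond bijectively to facets of $Q$; in particular, the hyperplane $\{x_{n+1}=0\}$ is not facet-defining, since it meets $\RR_+S$ only at the origin.

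Second, I would invoke Edmonds' polytope theorem for $Q$: one has
\[
Q \;=\; \{\xb \in \RR^n_+ : \xb(A)\leq \rho(A)\ \text{for all }A\subseteq [n]\},
\]
and the facet-defining inequalities are precisely $x_i\geq 0$ for $i\in[n]$, together with those $\xb(A)\leq \rho(A)$ for which $A$ is simultaneously $\rho$-closed and $\rho$-inseparable. Translating each such facet to its supporting hyperplane of $\RR_+S$: the coordinate facet $\{x_i=0\}$ lifts to $H_i$, and the rank facet $\{\xb(A)=\rho(A)\}$ lifts to $H_A$, since for every $\vb\in\P$ we have $f_A(p_\vb)=\rho(A)-\vb(A)\geq 0$ by the definition of $\rho$, with equality exactly when $\vb(A)=\rho(A)$. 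Each $f_A$ and each $f_i$ is automatically normalized in the sense of Section~\ref{one}, as their coefficients include a $\pm 1$, forcing the gcd to be $1$.

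The principal ingredient — Edmonds' identification of the facet-defining rank inequalities of $Q$ with the $\rho$-closed, $\rho$-inseparable subsets of $[n]$ — is a classical polymatroid-theoretic result that one imports from the literature; this is where essentially all the combinatorial content sits. The remaining work, namely the coning bijection between the facets of $Q$ and those of $\RR_+S$ together with the verification that the linear forms $f_A$ and $f_i$ furnish normalized supporting equations, is essentially formal.
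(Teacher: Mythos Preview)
The paper does not supply its own proof of this theorem: it is stated as a result of Edmonds, with a bare citation to \cite{E}, and the text immediately moves on to applying it. Your proposal correctly reconstructs the argument one would give, namely identifying $\RR_+S$ with the cone over the independence polytope $Q$, invoking Edmonds' classical facet description of $Q$, and transporting facets through the coning bijection. This is the standard (and essentially the only natural) route, so your approach agrees with what the paper implicitly relies on; there is nothing to compare against beyond the citation itself.
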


We denote by $P_A$ the monomial prime ideals of $R_\P$ determined by the hyperplanes $H_A$ and by $Q_i$ the monomial prime ideals determined by the hyperplanes $H_i$.  

Now we may apply  the results of  Section~\ref{two} and obtain

\begin{Theorem}
\label{basic}
Let $\P$ be a polymatroid on the ground  set $[n]$, and let $\rho$ be its ground set rank function. Let $\Ac$ be the set of  $\rho$-closed and $\rho$-inseparable subsets  of $[n]$.  Then $\Cl(R_\P)$ is generated by the classes $[P_A]$ with $A\in \Ac$.   Moreover, 
$
\sum_{A\in \Ac}\rho(A)[P_A]=0
$
is the only generating relation among these  generators of $\Cl(R_\P)$. 

In particular, $\Cl(R_\P)\iso \ZZ^{r-1}\dirsum \ZZ/d\ZZ$, where 
$r=|\Ac|$  and $d=\gcd\{\rho(A)\:\; A\in \Ac\}$.
\end{Theorem}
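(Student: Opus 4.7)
The plan is to invoke Theorem~\ref{classgroup} and compute each ingredient using Edmonds' description of the supporting hyperplanes (Theorem~\ref{crucial}) together with Lemma~\ref{vr}.

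First I would enumerate the height one monomial primes of $R_\P$. By Theorem~\ref{crucial}, the facets of $\RR_+S$ are cut out by the hyperplanes $H_A$ for $A\in\Ac$ and by the coordinate hyperplanes $H_i$ for $i\in[n]$; correspondingly the height one monomial primes of $R_\P$ are the $P_A$ ($A\in\Ac$) and the $Q_i$ ($i\in[n]$). Since the exponent vector of $t$ is $\eb_{n+1}$, one computes $f_A(\eb_{n+1})=\rho(A)\geq 1$, using that $A$ is nonempty together with $\eb_i\in\P$ (so $\rho(\{i\})\geq 1$), whereas $f_i(\eb_{n+1})=0$. Hence the minimal primes of $(t)$ are exactly $\{P_A:A\in\Ac\}$, and Theorem~\ref{classgroup} gives that the classes $[P_A]$, $A\in\Ac$, generate $\Cl(R_\P)$.

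Next I would extract the single generating relation supplied by Theorem~\ref{classgroup}, namely $\sum_{A\in\Ac}a_A[P_A]=0$ where $a_A$ is defined by $tR_{P_A}=P_A^{a_A}R_{P_A}$. Lemma~\ref{vr} yields $a_A=\langle \vb_{f_A},\eb_{n+1}\rangle$ provided $f_A$ is the normalized (i.e.\ primitive) support form of $P_A$. Here normalization is automatic: the coefficient vector of $f_A=-\sum_{i\in A}x_i+\rho(A)x_{n+1}$ has entries $-1$ in all positions indexed by the nonempty set $A$, so their gcd is $1$. Consequently $a_A=\rho(A)$, and the only generating relation among the $[P_A]$ is $\sum_{A\in\Ac}\rho(A)[P_A]=0$.

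For the last assertion, the description $\Cl(R_\P)\iso \ZZ^{\Ac}/\langle(\rho(A))_{A\in\Ac}\rangle$ is purely group theoretic: Smith normal form (a $\GL_r(\ZZ)$ change of basis) carries the relation vector to $(d,0,\ldots,0)$ with $d=\gcd\{\rho(A):A\in\Ac\}$, whence $\Cl(R_\P)\iso \ZZ/d\ZZ\dirsum\ZZ^{r-1}$. The only mildly delicate steps are verifying that $t\in P_A$ for all $A\in\Ac$ (so that Theorem~\ref{classgroup} applies with exactly the $P_A$) and that $f_A$ is already primitive (so that no rescaling intrudes into the calculation of $a_A$); both verifications are immediate, and the theorem is in essence a bookkeeping application of the three previously established results.
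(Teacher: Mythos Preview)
Your proof is correct and follows essentially the same route as the paper's: invoke Theorem~\ref{classgroup}, identify the minimal primes of $(t)$ via Edmonds' hyperplane description (Theorem~\ref{crucial}), and read off the coefficients $a_A=\rho(A)$ using Lemma~\ref{vr}. You are simply more explicit than the paper in checking that $t\in P_A$ while $t\notin Q_i$, and that $f_A$ is already primitive; the paper states these facts without verification.
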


\begin{proof}
The prime ideals $P_A$ are precisely the minimal prime ideals of $(t)$. Therefore,  the divisor classes $[P_A]$ with $A\in \Ac$ generate $\Cl(R_\P)$. By Edmond's theorem,  the coefficient vector of the support form of $P_A$ is $\vb_A=-\sum_{i\in A}e_i+\rho(A)e_{n+1}$. Therefore, by Theorem~\ref{classgroup} and Lemma~\ref{vr}, the generating relation of $\Cl(R_\P)$ is $\sum_{A\in \Ac} \rho(A)[P_A]=0$, as asserted. 

The resulting group structure of $\Cl(R_\P)$ is an immediate consequence of the statements before. 
\end{proof}

For the canonical class of $R=R_\P$ we have the following presentation.

\begin{Theorem}
\label{canonical class}
$
[\omega_R]=\sum_{A\in \Ac}(|A|+1)[P_A].
$
\end{Theorem}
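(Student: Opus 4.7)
The plan is to derive the result as a direct specialization of Theorem~\ref{verygood} to the polymatroid situation, reading off the support forms from Edmonds' theorem (Theorem~\ref{crucial}).

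First I would recall that by Theorem~\ref{basic} together with Theorem~\ref{crucial}, the height one monomial prime ideals of $R_\P$ that contain $t$ are precisely the $P_A$ with $A\in \Ac$, and the corresponding supporting hyperplane $H_A$ is cut out by
\[
f_A(x)=-\sum_{i\in A}x_i+\rho(A)x_{n+1}.
\]
The next step is to verify that $f_A$ is the \emph{normalized} support form in the sense of Section~\ref{one}. Since $A$ is nonempty (by the definition of $\rho$-closed), the coefficient vector $\vb_{f_A}$ has at least one entry equal to $-1$, hence its gcd equals $1$. Moreover, $f_A(p_\vb)=\rho(A)-\vb(A)\geq 0$ for every $\vb\in\P$ by definition of $\rho$, so $f_A$ has the correct sign convention. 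Therefore the coefficient vector associated with $P_A$ is
\[
\vb_{f_A}=(c_{1,A},\ldots,c_{n,A},c_{n+1,A}),\qquad c_{i,A}=\begin{cases}-1,&i\in A,\\ 0,& i\in[n]\setminus A,\end{cases}\qquad c_{n+1,A}=\rho(A).
\]

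I would then invoke Theorem~\ref{verygood}, which gives
\[
[\omega_R]=\sum_{A\in\Ac}\Bigl(1-\sum_{i=1}^{n}c_{i,A}\Bigr)[P_A].
\]
Substituting the values above yields $1-\sum_{i=1}^{n}c_{i,A}=1-\sum_{i\in A}(-1)=1+|A|$, from which the claimed formula follows immediately.

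The only non-routine point is the verification that $f_A$ is normalized and oriented correctly; both parts are short, with the gcd condition relying on $A\neq\emptyset$ and the sign condition being exactly the defining inequality of the ground set rank function. Everything else is a transcription from Theorem~\ref{verygood}.
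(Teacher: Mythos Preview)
Your proposal is correct and follows essentially the same route as the paper: read off the coefficients $c_{i,A}$ from the Edmonds support form $f_A$ and plug them into Theorem~\ref{verygood}. You add a short verification that $f_A$ is normalized and correctly oriented, which the paper leaves implicit, but otherwise the arguments coincide.
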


\begin{proof}
Let $\vb_A=(c_{1,A}, \ldots, c_{n+1,A})$ be the coefficient vector of the linear form  $f_A$. It follows from (\ref{edmondsform}) that $c_{i,A}=-1$ if $i\in A$,  and $c_{i,A}=0$ if $i\not\in A$. Therefore, Theorem~\ref{verygood} yields the desired result.
\end{proof}

\begin{Corollary}
\label{gorenstein}
$R_\P$ is Gorenstein if and only if there exists an integer $a$ such that 
\[
|A|+1=a\rho(A)
\]
for all $\rho$-closed and $\rho$-inseparable  $A\subseteq [n]$. 
\end{Corollary}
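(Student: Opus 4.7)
The plan is to reduce the Gorenstein property to the vanishing of the canonical class in $\Cl(R_\P)$, and then use the explicit presentation of the class group from Theorem~\ref{basic} to read off the condition.

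First I would recall that, by Theorem~\ref{normalnice}, $R_\P$ is a normal affine semigroup ring and hence Cohen--Macaulay by Hochster's theorem. For such rings the Gorenstein property is equivalent to the canonical module being a free module of rank one, equivalently $[\omega_{R_\P}] = 0$ in $\Cl(R_\P)$. Thus the problem becomes purely a computation in the divisor class group.

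Next I would combine the two theorems immediately preceding the statement. Theorem~\ref{canonical class} gives
\[
[\omega_{R_\P}] \;=\; \sum_{A\in \Ac} (|A|+1)\,[P_A],
\]
while Theorem~\ref{basic} tells us that $\Cl(R_\P)$ is the free abelian group on the generators $\{[P_A] : A \in \Ac\}$ modulo the single relation $\sum_{A\in\Ac}\rho(A)[P_A]=0$. Consequently, an integer combination $\sum_{A} n_A[P_A]$ vanishes in $\Cl(R_\P)$ if and only if the integer vector $(n_A)_{A\in\Ac}$ is an integer multiple of $(\rho(A))_{A\in\Ac}$, i.e.\ there exists $a\in\ZZ$ with $n_A = a\rho(A)$ for every $A\in\Ac$.

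Putting these ingredients together: $R_\P$ is Gorenstein if and only if $[\omega_{R_\P}]=0$, and applying the criterion above to $n_A=|A|+1$ gives the stated equivalence, namely that there is an integer $a$ with $|A|+1 = a\rho(A)$ for all $\rho$-closed and $\rho$-inseparable $A \subseteq [n]$. There is no real obstacle; the only point requiring a moment's care is the uniqueness of the relation in $\Cl(R_\P)$, which is precisely what allows one to characterise the zero element in the class group by proportionality to $(\rho(A))_{A\in \Ac}$. Note also that since each $|A|+1 \geq 1$, the constant $a$ is automatically positive.
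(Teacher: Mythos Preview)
Your proof is correct and follows essentially the same approach as the paper: reduce Gorensteinness to $[\omega_{R_\P}]=0$, invoke Theorem~\ref{canonical class} for the expression of the canonical class, and then use the single-relation presentation of $\Cl(R_\P)$ from Theorem~\ref{basic} to conclude. Your write-up simply makes explicit the intermediate step (that a combination $\sum_A n_A[P_A]$ vanishes iff $(n_A)$ is an integer multiple of $(\rho(A))$) which the paper leaves implicit.
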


\begin{proof}
The ring $R_\P$ is Gorenstein if and only if $\omega_{R_\Pc}$ is a principal ideal, which by Theorem~\ref{canonical class} is the case if and only if $\sum_A(|A|+1)[P_A]=0$. Thus the desired result  follows from Theorem~\ref{basic}.
\end{proof}

As a first example consider for any integer $d>0$  the discrete polymatroid $\P_d=\{v\:\; |v|\leq d\}$ on the ground set $[n]$. For $\P_d$ the only $\rho$-closed and $\rho$-inseparable set is $[n]$ with $\rho([n])=d$. Thus,  Theorem~\ref{basic} implies that $\Cl(R_{P_d})\iso \ZZ/d\ZZ$, and from Corollary~\ref{gorenstein} it follows that $R_{\P_d}$ is Gorenstein if and only if $d$ divides $n+1$. 

\medskip
Next we present another application of Theorem~\ref{basic} and Corollary~\ref{gorenstein}. 

\begin{Example}
{\em Let $\vb=(v_1,\ldots,v_n)\in \ZZ^n$ be a vector with $v_i\neq 0$ for all $i$, and let $$\P=\{\wb\in \ZZ^n:\ \wb\leq \vb\}.$$
For a nonempty subset $A\subseteq [n]$, we have $\rho(A)=\sum_{i\in A}v_i$. Hence $\rho(A)<\rho(B)$ for any set $B\supsetneq A$. Thus $A$ is $\rho$-closed. If $|A|>1$, then $\rho(A)=\rho(A\setminus\{i\})+\rho(\{i\})$, which implies that $A$ is $\rho$-separable. Therefore the 
$\rho$-closed and $\rho$-inseparable subsets of $[n]$ are $\{1\},\{2\},\ldots,\{n\}$.
Moreover $\rho(\{i\})=v_i$ for $1\leq i\leq n$. 

Then by Theorem~\ref{basic},  $\Cl(R_\P)=\ZZ^{n-1} \oplus \ZZ/d\ZZ$, where $d=\gcd(v_1,\ldots,v_n)$. Moreover, by Corollary~\ref{gorenstein},  $R_\P$ is Gorenstein if and only if $1\leq v_1=v_2=\cdots=v_n\leq 2$.  
} 
\end{Example}

	The following corollary gives a necessary condition on a matroid $M$ for  $R_M$ to be  Gorenstein.
	Recall that a graph $G$ is called unmixed if all its maximal independent sets have the same cardinality. 
	
	\begin{Corollary}
		Let $M$ be a matroid on $[n]$ and let $G_M$ be the graph on $[n]$ whose edge set is the $1$-skeleton of $M$. If $R_M$ is Gorenstein, then $G_M$ is unmixed.  
	\end{Corollary}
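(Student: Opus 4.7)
The plan is to identify maximal (graph-theoretic) independent sets of $G_M$ with parallel classes of the matroid $M$, and then verify that each parallel class is a $\rho$-closed and $\rho$-inseparable subset of rank $1$. Together with the Gorenstein criterion of Corollary~\ref{gorenstein}, this will force all parallel classes to have the same cardinality, which is exactly what unmixedness of $G_M$ amounts to.

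First I would observe that since $M$ is a multicomplex, it contains each $\eb_i$, so $M$ has no loops, and hence two distinct elements $i,j\in [n]$ are parallel precisely when $\rho(\{i,j\})=1$, which by definition means that $\{i,j\}$ is not an edge of the $1$-skeleton $G_M$. Being parallel is an equivalence relation on $[n]$, so the maximal graph-theoretic independent sets of $G_M$ coincide exactly with the parallel classes of $M$. Therefore $G_M$ is unmixed if and only if all parallel classes of $M$ have the same cardinality.

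Next, I would verify that every parallel class $C$ of $M$ is a $\rho$-closed and $\rho$-inseparable subset of $[n]$ with $\rho(C)=1$. Clearly $\rho(C)=1$ since all elements of $C$ are pairwise parallel and there are no loops. For any $B\supsetneq C$ there exists $j\in B\setminus C$ not parallel to the elements of $C$, so $\rho(B)\geq 2>\rho(C)$, giving $\rho$-closedness. For any partition $C=A_1\sqcup A_2$ with $A_1,A_2$ nonempty, $\rho(A_1)+\rho(A_2)\geq 1+1>1=\rho(C)$, so $C$ is $\rho$-inseparable.

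Finally, assuming $R_M$ is Gorenstein, Corollary~\ref{gorenstein} supplies an integer $a$ such that $|A|+1=a\rho(A)$ for every $\rho$-closed and $\rho$-inseparable $A\subseteq [n]$. Applied to each parallel class $C$, where $\rho(C)=1$, this yields $|C|+1=a$, and hence every parallel class has cardinality $a-1$. This proves that $G_M$ is unmixed. I do not expect any serious obstacle here; the only care needed is in spelling out the matroid-theoretic facts that closures of singletons of a loopless matroid are parallel classes and that such classes are connected (inseparable) as matroid restrictions.
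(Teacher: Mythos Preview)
Your proof is correct and follows essentially the same route as the paper: both arguments show that every maximal independent set of $G_M$ (which you additionally identify with a parallel class of the loopless matroid $M$) is $\rho$-closed, $\rho$-inseparable, and has $\rho$-value $1$, and then invoke Corollary~\ref{gorenstein} to force all of them to have the same cardinality. The only difference is that you phrase everything in matroid language (parallel classes, no loops), whereas the paper works directly with the graph $G_M$ and the formula $\rho(A)=\max\{|A\cap F|:F\in M\}$.
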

	
	\begin{proof}
		Let $\rho\: 2 ^{[n]}\to \ZZ_+$ be the ground set rank function of $M$, where 
		\[
		\rho(A)=\max\{|A\cap F|\: F\in M\}.
		\] The maximal independent sets of $G_M$ are the parallel classes of $M$.
		 Hence each maximal independent set $A$ of $G_M$ is $\rho$-closed and $\rho$-inseparable with $\rho(A)=1$.

		
	By Corollary~\ref{gorenstein} if $R_M$ is Gorenstein, then there exists an integer $a$ such that for any  maximal independent set $A$ of $G_M$,  $|A|+1=a\rho(A)=a$. This means that $G_M$ is unmixed. 
	\end{proof}

\section{Classes of transversal polymatroids}
Let $\Ac = (A_1, \ldots,A_d)$ be a family of nonempty subsets of $[n]$ and suppose that $[n] = A_1\cup \ldots \cup A_d$.  It is {\em not} required that $A_i \neq A_j$ if $i \neq j$.  One defines the integer valued nondecreasing function $\rho_\Ac : 2^{[n]} \to \ZZ_+$ by setting
\[
\rho_\Ac(X) = |\{ k \in [n] : X \cap A_k \neq \emptyset \}|, \, \, \, \, \, X \subseteq [n]. 
\]
It follows from \cite[Exercise 12.2]{HHBook} that $\rho_\Ac$ is submodular and the set of bases of the discrete polymatroid $\Pc_\Ac \subset \ZZ_+^n$ arising from $\rho_\Ac$ is
\[
B_\Ac = \{e_{i_1} + \cdots + e_{i_d} : i_k \in A_k, 1 \leq k \leq d\} \subseteq \ZZ_+^n.
\]
One says that the discrete polymatroid $\Pc_\Ac$ is the {\em transversal polymatroid} presented by $\Ac$. 

\begin{Theorem}
\label{torsionfree}
Let $\Pc_\Ac$ be the transversal polymatroid presented by $\Ac = (A_1, \ldots, A_d)$ and suppose that there is an $i$ with $A_i \not\subseteq \cup_{i \neq j} A_j$.  Then ${\Cl}(R_{\Pc_\Ac})$ is free.
\end{Theorem}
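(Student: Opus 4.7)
The plan is to reduce the statement to Theorem~\ref{basic}. That theorem gives $\Cl(R_{\Pc_\Ac})\iso \ZZ^{r-1}\dirsum \ZZ/d\ZZ$, where $r$ is the number of $\rho$-closed and $\rho$-inseparable subsets of $[n]$ and $d=\gcd\{\rho(A):A\text{ is } \rho\text{-closed and }\rho\text{-inseparable}\}$. Hence $\Cl(R_{\Pc_\Ac})$ is free precisely when $d=1$, which is certified by producing \emph{a single} $\rho$-closed, $\rho$-inseparable subset of rank $1$.

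The hypothesis suggests an obvious candidate. Let $i$ be an index with $A_i\not\subseteq \bigcup_{j\neq i}A_j$ and set
\[
C_i \;=\; A_i\setminus \bigcup_{j\neq i}A_j,
\]
which is nonempty. By construction $C_i\cap A_k=\emptyset$ for every $k\neq i$ while $C_i\cap A_i=C_i\neq\emptyset$, so $\rho_\Ac(C_i)=1$.

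Next I would check that $C_i$ is $\rho$-closed. If $B\supsetneq C_i$ and $y\in B\setminus C_i$, then $y\notin C_i$ forces $y\in A_j$ for some $j\neq i$ (whether or not $y\in A_i$). Combined with $B\supseteq C_i\subseteq A_i$, this yields $B\cap A_i\neq\emptyset$ and $B\cap A_j\neq\emptyset$ with $j\neq i$, so $\rho_\Ac(B)\geq 2>\rho_\Ac(C_i)$. To check $\rho$-inseparability, observe that because $[n]=\bigcup_k A_k$, every nonempty $X\subseteq[n]$ meets at least one $A_k$, hence $\rho_\Ac(X)\geq 1$. Consequently any nontrivial partition $C_i=X\sqcup Y$ satisfies $\rho_\Ac(X)+\rho_\Ac(Y)\geq 2>1=\rho_\Ac(C_i)$, ruling out separability; if $|C_i|=1$ then inseparability is automatic.

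The two verifications are essentially routine once the right candidate $C_i$ is in hand, so I do not expect a serious obstacle: the only ingredient beyond the definitions is the elementary bound $\rho_\Ac(X)\geq 1$ for nonempty $X$. Conclude by invoking Theorem~\ref{basic}: since $1\in\{\rho(A):A\text{ is }\rho\text{-closed and }\rho\text{-inseparable}\}$, the gcd $d$ equals $1$, and $\Cl(R_{\Pc_\Ac})\iso\ZZ^{r-1}$ is free.
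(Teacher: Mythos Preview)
Your proposal is correct and follows exactly the same approach as the paper: define $X=A_i\setminus\bigcup_{j\neq i}A_j$, observe that $\rho_\Ac(X)=1$, check that $X$ is $\rho_\Ac$-closed and $\rho_\Ac$-inseparable, and conclude from Theorem~\ref{basic} that the torsion part vanishes. You have actually supplied more detail than the paper, which simply asserts the closedness and inseparability of $X$ without verification.
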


\begin{proof}
Let $X = A_i \setminus \cup_{i \neq j} A_j$.  Then $X \neq \emptyset$ is $\rho_\Ac$-closed and $\rho_\Ac$-inseparable.  Since $\rho_\Ac(X) = 1$, it follows that ${\Cl}(R_{\Pc_\Ac})$ is torsion free.
\end{proof}

Now, fix $1 < i < n$ and write ${[n] \choose i}$ for the set of all $i$-element subsets of $[n]$.  Let $\Pc_{n,i}$ denote the transversal polymatroid presented by ${[n] \choose i}$, and let $\rho_{n,i}$ denote the rank function of $\Pc_{n,i}$.

\begin{Theorem}
	\label{transversal_theorem}
	$
	{\Cl}(R_{\Pc_{n,i}}) = \ZZ^{r-1},
	$
	where
	\[
	r = {n \choose 1} + {n\choose 2} + \cdots + {n \choose n-i}+1.
	\]
	
\end{Theorem}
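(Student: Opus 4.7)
The plan is to apply Theorem~\ref{basic} and feed in the ingredients already established in Lemma~\ref{rank_n_choose_i} and Lemma~\ref{closed_inseparable}. Theorem~\ref{basic} tells us that
\[
\Cl(R_{\Pc_{n,i}}) \iso \ZZ^{r-1} \dirsum \ZZ/d\ZZ,
\]
where $r=|\Ac|$ is the number of $\rho_{n,i}$-closed and $\rho_{n,i}$-inseparable subsets of $[n]$ and $d=\gcd\{\rho_{n,i}(A):A\in\Ac\}$. So the task splits cleanly into (a) computing $r$ and (b) showing $d=1$.

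For step (a), Lemma~\ref{closed_inseparable} characterizes $\Ac$ as the collection of nonempty subsets $X\subseteq[n]$ of size at most $n-i$, together with the full set $[n]$. Summing by cardinality yields
\[
r = \sum_{a=1}^{n-i}\binom{n}{a} + 1 = \binom{n}{1}+\binom{n}{2}+\cdots+\binom{n}{n-i}+1,
\]
which matches the formula claimed in the theorem. This part is a direct count and presents no difficulty.

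For step (b), I would exhibit two elements of $\Ac$ whose rank values differ by $1$. Taking $X=[n]$, Lemma~\ref{rank_n_choose_i} gives $\rho_{n,i}([n])=\binom{n}{i}$. Taking any subset $Y\subseteq[n]$ of size exactly $n-i$ (which lies in $\Ac$ since $|Y|\leq n-i$), the same lemma gives
\[
\rho_{n,i}(Y)=\binom{n}{i}-\binom{i}{i}=\binom{n}{i}-1.
\]
Since $d$ divides both $\binom{n}{i}$ and $\binom{n}{i}-1$, it divides their difference $1$, so $d=1$.

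Combining (a) and (b) with Theorem~\ref{basic} gives $\Cl(R_{\Pc_{n,i}})\iso\ZZ^{r-1}$, as required. There is essentially no real obstacle here; the only subtle point worth spelling out is that the hypothesis $1<i<n$ guarantees $n-i\geq 1$, so a subset $Y$ of size $n-i$ actually exists and belongs to $\Ac$, making the coprimality argument for $d=1$ legitimate.
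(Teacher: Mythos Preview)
Your proof is correct and follows essentially the same approach as the paper: invoke Theorem~\ref{basic} together with Lemmas~\ref{rank_n_choose_i} and~\ref{closed_inseparable}, count $r$, and verify $d=1$. In fact your argument for $d=1$ is more explicit than the paper's, which simply asserts the gcd of the listed rank values is $1$; your observation that $\rho_{n,i}([n])=\binom{n}{i}$ and $\rho_{n,i}(Y)=\binom{n}{i}-1$ for $|Y|=n-i$ supplies the missing justification.
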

\medskip 

In order to prove Theorem~\ref{transversal_theorem}, we need the following two lemmata. The first one is very easy to check. So the proof is omitted.

\begin{Lemma}
\label{rank_n_choose_i}
Let $X \subseteq [n]$.  Then
\[
\rho_{n,i}(X) = \left\{
\begin{array}{ll}
{n \choose i} - {n-|X| \choose i} & (|X| \leq n - i);\\
{n \choose i} & (|X| > n - i).
\end{array}
\right.
\]
\end{Lemma}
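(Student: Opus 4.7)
The plan is to count by complementation. By the definition of the rank function of a transversal polymatroid, $\rho_{n,i}(X)$ equals the number of $i$-element subsets $A \in \binom{[n]}{i}$ that satisfy $X \cap A \neq \emptyset$. Equivalently,
\[
\rho_{n,i}(X) = \binom{n}{i} - \bigl|\{A \in \tbinom{[n]}{i} : A \cap X = \emptyset\}\bigr|.
\]

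Next I would observe that an $i$-subset $A$ satisfies $A \cap X = \emptyset$ if and only if $A \subseteq [n] \setminus X$, and $[n] \setminus X$ has exactly $n - |X|$ elements. Therefore the number of such $A$ equals $\binom{n - |X|}{i}$ when $n - |X| \geq i$, i.e.\ when $|X| \leq n - i$, and equals $0$ otherwise (since no $i$-element subset fits inside a set of cardinality less than $i$, and by convention $\binom{m}{i} = 0$ for $m < i$).

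Substituting into the complementation identity yields the two cases of the lemma. This proof is just a direct count, so there is no real obstacle; the only point to be mindful of is the boundary case $|X| > n - i$, which is handled separately because the binomial coefficient $\binom{n-|X|}{i}$ would otherwise need the convention $\binom{m}{i} = 0$ for $m < i$.
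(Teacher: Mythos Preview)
Your proof is correct and follows essentially the same approach as the paper: both count by complementation, observing that $A \cap X = \emptyset$ if and only if $A \subseteq [n]\setminus X$, and then count such $A$ as $\binom{n-|X|}{i}$ (or $0$ when $|X|>n-i$). The paper's version differs only cosmetically, assuming without loss of generality that $X=\{1,\ldots,a\}$ before counting.
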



\begin{Lemma}
\label{closed_inseparable}
A subset $X \subseteq [n]$ is $\rho$-closed and $\rho$-inseparable if and only if $1 \leq |X| \leq n - i$ or $X=[n]$.
\end{Lemma}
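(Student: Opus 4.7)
The plan is to exploit Lemma~\ref{rank_n_choose_i}, which shows $\rho:=\rho_{n,i}$ depends only on $|X|$. With the convention $\binom{m}{i}=0$ for $m<i$, we may write uniformly $\rho(X)=\binom{n}{i}-\binom{n-|X|}{i}$. As a function of $|X|$ this is strictly increasing on $\{0,\ldots,n-i+1\}$ and constant (equal to $\binom{n}{i}$) on $\{n-i+1,\ldots,n\}$. Consequently a nonempty $X$ is $\rho$-closed iff either $X=[n]$ (vacuously) or $\rho(|X|+1)>\rho(|X|)$, which happens exactly when $|X|\leq n-i$. This settles the $\rho$-closed characterisation.

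For inseparability I would show every such $X$ is automatically $\rho$-inseparable. The clean tool is inclusion–exclusion applied to the set system of $i$-subsets of $[n]$: since $\rho(Y)=|\{A\in\tbinom{[n]}{i}:A\cap Y\neq\emptyset\}|$, for any disjoint $X_1,X_2$ one has
\[
\rho(X_1)+\rho(X_2)-\rho(X_1\cup X_2)=\bigl|\{A\in\tbinom{[n]}{i}:A\cap X_1\neq\emptyset\text{ and }A\cap X_2\neq\emptyset\}\bigr|.
\]
When $|X|=1$ no proper partition exists and $X$ is trivially inseparable. For $|X|\geq 2$ (including $X=[n]$), given any partition $X=X_1\sqcup X_2$ with $|X_j|\geq 1$, the right-hand side is at least $1$: fix $x_1\in X_1$, $x_2\in X_2$, and extend $\{x_1,x_2\}$ to an $i$-subset by adjoining $i-2$ further elements of $[n]\setminus\{x_1,x_2\}$, which is possible because $i\geq 2$ and $n\geq i$. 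Hence $\rho(X_1)+\rho(X_2)>\rho(X)$, confirming $\rho$-inseparability.

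The two steps combined give the equivalence of the lemma. I do not foresee a serious obstacle; the only mildly delicate point is the positivity of the intersection count, which reduces to the elementary observation that an $i$-subset of $[n]$ meeting both parts of a nontrivial bipartition of $X$ exists under the standing hypothesis $1<i<n$. A pleasant feature of the inclusion–exclusion identity is that it handles the cases $X\subsetneq[n]$ and $X=[n]$ uniformly, without having to split into the regimes $|X|\leq n-i$ and $|X|=n$.
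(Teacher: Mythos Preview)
Your proposal is correct and follows essentially the same route as the paper. Both arguments first use Lemma~\ref{rank_n_choose_i} to pin down the $\rho$-closed sets via the monotonicity of $\rho$ in $|X|$, and then establish inseparability by exhibiting a single $i$-subset meeting both parts of a nontrivial partition; your inclusion--exclusion identity simply makes explicit the counting underlying the paper's one-line observation that $\{1,2,\ldots,i\}$ intersects both $Y$ and $Z$.
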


\begin{proof}
Lemma \ref{rank_n_choose_i} says that $X \subseteq [n]$ is $\rho$-closed if and only if $1 \leq |X| \leq n - i$ or $X=[n]$.  It is clear that if $|X| = 1$, then $X$ is $\rho$-inseparable.  Let $1 < |X| \leq n - i$ or $X=[n]$ and $X = Y \cup Z$, where $Y \neq \emptyset, Z \neq \emptyset$ and $Y \cap Z = \emptyset$.  Let say, $1 \in Y, 2 \in Z$.  Then $\{1,2,\ldots, i\}$ intersects both $Y$ and $Z$.  Hence   
$
\rho_{n,i}(Y) + \rho_{n,i}(Z) > \rho_{n,i}(X).
$
Thus $X$ is $\rho$-inseparable.
\end{proof}

{\em Proof of Theorem \ref{transversal_theorem}}.
The result follows from Theorem
\ref{basic} and Lemma \ref{closed_inseparable} once we note that the greatest common divisor of the numbers 
\[
{n \choose i} - {n-1 \choose i}, {n \choose i} - {n-2 \choose i}, \ldots, {n \choose i} - {n - (n-i) \choose i}, {n\choose i}
\]
is $1$.

\begin{Example}
	\label{EXa}
	{\em
		Let $n = 7, i=4$.  Then 
		\[
		{7 \choose 4} - {6 \choose 4} = 20, \,
		{7 \choose 4} - {5 \choose 4} = 30, \, 
		{7 \choose 4} - {4 \choose 4} = 34, \,
		{7 \choose 4} = 35. \,  
		\]
		Hence
		\[
		{\Cl}(R_{\Pc_{7,4}}) = \ZZ^{r-1},
		\]
		where $r = {7 \choose 1} + {7 \choose 2} + {7 \choose 3}+1 = 64$.
		
		
	}
\end{Example}







\begin{Theorem}
\label{EXb}
Let $\Ac = (\,\underbrace{A_1, \ldots, A_1}_{k_1}\,, \ldots, \, \underbrace{A_r, \ldots, A_r}_{k_r}\,)$, where $A_1\subsetneq \cdots\subsetneq A_r=[n]$, and let $\Pc_\Ac$ be the transversal polymatroid presented by $\Ac$. Then $\Cl(R_{\Pc_\Ac}) = \ZZ^{r-1} \oplus \ZZ/d\ZZ$, where $d=\gcd(k_1,\ldots,k_r)$. 
\end{Theorem}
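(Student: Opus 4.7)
My plan is to apply Theorem~\ref{basic}, which reduces the problem to enumerating the $\rho_\Ac$-closed and $\rho_\Ac$-inseparable subsets of $[n]$ together with their rank values. The chain condition makes $\rho_\Ac$ very transparent: for any nonempty $X \subseteq [n]$, set $j(X) = \min\{j : X \cap A_j \neq \emptyset\}$. Because $A_{j(X)} \subseteq A_{j(X)+1} \subseteq \cdots \subseteq A_r$, one has $X \cap A_j \neq \emptyset$ for every $j \geq j(X)$, and therefore
\[
\rho_\Ac(X) = k_{j(X)} + k_{j(X)+1} + \cdots + k_r.
\]

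To identify the $\rho$-closed sets, set $A_0 := \emptyset$ and $Y_i := [n] \setminus A_i$ for $0 \leq i \leq r-1$. Adding an element $y \notin X$ strictly increases $\rho_\Ac$ precisely when $j(X \cup \{y\}) < j(X)$, which by the chain structure is equivalent to $y \in A_{j(X)-1}$. Hence $X$ is $\rho$-closed if and only if either $j(X)=1$ and $X=[n]=Y_0$ (since $\rho_\Ac$ is already maximal), or $j(X)=i+1\geq 2$ with $[n]\setminus X\subseteq A_i$; combined with $X\cap A_i=\emptyset$, this forces $X=Y_i$. For $\rho$-inseparability, write $Y_i = X_1\sqcup X_2$ with both parts nonempty. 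Since $j(Y_i)=\min(j(X_1),j(X_2))$, at least one $X_s$ satisfies $\rho_\Ac(X_s)=\rho_\Ac(Y_i)$, while the other satisfies $\rho_\Ac\geq k_r\geq 1$ (every nonempty subset meets $A_r=[n]$). Thus $\rho_\Ac(X_1)+\rho_\Ac(X_2)>\rho_\Ac(Y_i)$, establishing inseparability.

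Theorem~\ref{basic} then yields $\Cl(R_{\Pc_\Ac})\cong \ZZ^{r-1}\oplus \ZZ/d\ZZ$ with
\[
d=\gcd\bigl(\rho_\Ac(Y_0),\ldots,\rho_\Ac(Y_{r-1})\bigr)=\gcd\bigl(k_1+k_2+\cdots+k_r,\ k_2+\cdots+k_r,\ \ldots,\ k_r\bigr),
\]
and iterating the identity $\gcd(a+b,b)=\gcd(a,b)$ collapses this to $\gcd(k_1,\ldots,k_r)$, as required. I expect the main obstacle to be the $\rho$-inseparability verification, where the chain hypothesis is essential; for a general family $\Ac$ the collection of $\rho$-closed, $\rho$-inseparable sets can be far richer, as Theorem~\ref{torsionfree} already hints. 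The remainder is routine bookkeeping with the formula for $\rho_\Ac$.
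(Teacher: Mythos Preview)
Your proof is correct and follows essentially the same route as the paper: both arguments compute $\rho_\Ac(X)$ via the minimal index $j$ with $X\cap A_j\neq\emptyset$, identify the $\rho_\Ac$-closed sets as $[n]\setminus A_i$ for $0\le i\le r-1$, verify inseparability by noting that in any split one piece already attains the full rank while the other contributes at least $k_r$, and then invoke Theorem~\ref{basic}. Your write-up is in fact slightly more complete than the paper's, since you spell out the elementary reduction $\gcd(k_1+\cdots+k_r,\ldots,k_r)=\gcd(k_1,\ldots,k_r)$, which the paper leaves implicit.
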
 

\begin{proof}
 Let $X\subseteq [n]$ be a $\rho_\Ac$-closed set and let $i$ be the smallest integer such that $X\cap A_{i}\neq \emptyset$. Since $X\subseteq [n]\setminus A_{i-1}$ and $\rho_\Ac([n]\setminus A_{i-1})=\rho_\Ac(X)$, we have  $X=[n]\setminus A_{i-1}$. Hence $[n], [n]\setminus A_1,\ldots,[n]\setminus A_{r-1}$ are the $\rho_\Ac$-closed sets.  Furthermore, each set $[n]\setminus A_{i}$ is $\rho_\Ac$-inseparable.  Indeed one has $\rho_\Ac([n]\setminus A_{i})=k_{i+1}+\cdots+k_r$. Let $[n]\setminus A_i=X\cup Y$ with $X\cap Y=\emptyset$, $X\neq \emptyset$ and $Y\neq \emptyset$.
			We may assume that $X\cap A_{i+1}\neq \emptyset$. Then $\rho_\Ac(X)=\rho_\Ac([n]\setminus A_i)$ and $\rho_\Ac(Y)\geq k_r$. Hence $\rho_\Ac(X)+\rho_\Ac(Y)>\rho_\Ac([n]\setminus A_i)$, as desired. 
			Hence $\Cl(R_{\Pc_\Ac}) = \ZZ^{r-1} \oplus \ZZ/d\ZZ$, where $d=\gcd(k_1,\ldots,k_r)$. 
\end{proof}

By choosing $k_1,\ldots,k_r=d$ in Theorem~\ref{EXb} we obtain the  following fundamental result in the present section.

\begin{Corollary}
\label{n_and_d}
Given $r > 0$ and $d > 0$, there exists a transversal polymatroid $\Pc_\Ac$ for which 
\[
{\Cl}(R_{\Pc_{\Ac}}) = \ZZ^{r-1} \oplus \ZZ/d\ZZ.
\]
\end{Corollary}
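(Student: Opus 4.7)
The plan is to specialize Theorem~\ref{EXb} to the case where all multiplicities coincide. Specifically, fix $n = r$ and take the strictly increasing chain
\[
A_1 \subsetneq A_2 \subsetneq \cdots \subsetneq A_r = [n]
\]
given by $A_i = \{1, 2, \ldots, i\}$ for $1 \leq i \leq r$. Now set $k_1 = k_2 = \cdots = k_r = d$ and form the family
\[
\Ac = (\,\underbrace{A_1, \ldots, A_1}_{d}\,, \ldots, \, \underbrace{A_r, \ldots, A_r}_{d}\,).
\]
Since $A_r = [n]$, the family $\Ac$ covers $[n]$ and therefore presents a transversal polymatroid $\Pc_\Ac$.

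Now I would simply invoke Theorem~\ref{EXb} applied to this $\Ac$. Since $\gcd(k_1, \ldots, k_r) = \gcd(d, \ldots, d) = d$, the theorem yields
\[
\Cl(R_{\Pc_\Ac}) = \ZZ^{r-1} \oplus \ZZ/d\ZZ,
\]
which is precisely the desired conclusion. There is no real obstacle here: the corollary is a direct specialization. The only check is that the construction is admissible, namely that an ascending chain of $r$ proper subsets with $A_r = [n]$ exists on some ground set, which is obvious for $n \geq r$ and explicitly realized by the chain $[1] \subsetneq [2] \subsetneq \cdots \subsetneq [r]$.
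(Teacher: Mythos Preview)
Your proposal is correct and follows essentially the same approach as the paper: the paper simply states that the corollary is obtained by choosing $k_1 = \cdots = k_r = d$ in Theorem~\ref{EXb}, and you have done exactly that, merely making the chain $A_i = [i]$ explicit.
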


Let $G$ be a graph on $[n]$. A subset $C\subseteq [n]$ is called a {\em vertex cover} of $G$ if it intersects any edge of $G$.

	\begin{Example}
		\label{EXc}
{\em Let $n \geq 3$, and let $G$ be a finite simple connected graph on $[n]$ with the edge set $\{e_1, \ldots, e_s\}$. Suppose that $G$ is not the star graph $K_{1,n-1}$ (which is equivalent to $\cap_{i=1}^s e_i=\emptyset$). 
			Let $A_i = [n] \setminus e_i$ and $\Ac = (A_1, \ldots, A_s)$.  Let $\Pc_\Ac$ be the transversal polymatroid presented by $\Ac$. For any set $X\subseteq [n]$ we have $\rho_\Ac(X)>0$. Moreover, if $|X| > 2$, then $\rho_\Ac(X) = s$. 
			If $i$ is a leaf of $G$, then $\rho_\Ac(\{i\})=s-1$. Otherwise $\rho_\Ac(\{i\})\leq s-2$.  Let $i < j$.  If $\{i,j\}$ is an edge of $G$, then $\rho_\Ac(\{i,j\}) = s - 1$.  If $\{i,j\}$ is not an edge of $G$, then $\rho_\Ac(\{i,j\}) = s$.  Hence a set $\{i\}$ is $\rho_\Ac$-closed and $\rho_\Ac$-inseparable if and only if $\deg_G(i)\geq 2$.  Furthermore, $\{i,j\}$ is $\rho_\Ac$-closed if and only if $\{i,j\}$ is an edge of $G$.  Let $\{i,j\}$ be an edge of $G$ and suppose that $\{i,j\}$ is not $\rho_\Ac$-inseparable.  Then $\rho_\Ac(\{i,j\}) = \rho_\Ac(\{i\}) + \rho_\Ac(\{j\})$, which says that each edge contains either $i$ or $j$. In other words the set $\{i,j\}$ is a vertex cover of $G$.  Hence a set $\{i, j\}$ is $\rho_\Ac$-closed and $\rho_\Ac$-inseparable if and only if  $\{i,j\}$ is and edge of $G$ and it is not a vertex cover of $G$.  Clearly, $[n]$ is $\rho_\Ac$-closed. Let $[n]=X\cup Y$ with $X\neq \emptyset$, $Y\neq \emptyset$ and $X\cap Y=\emptyset$. If $n\geq 5$, then either $|X|>2$ or $|Y|>2$. Hence $\rho_\Ac(X)=s$ or $\rho_\Ac(Y)=s$. Since $\rho_\Ac(X)>0$ and $\rho_\Ac(Y)>0$, we get $\rho_\Ac(X)+\rho_\Ac(Y)>s=\rho_\Ac([n])$. Hence $[n]$ is $\rho_\Ac$-inseparable.
			If $n=3$, since $G\neq K_{1,2}$ and $G$ is connected we have $G=K_3$. Then for $X=\{1,2\}$ and $Y=\{3\}$ we have $\rho_\Ac([3])=\rho_\Ac(X)+\rho_\Ac(Y)$. Hence $[n]$ is $\rho_\Ac$-separable. Now, let $n=4$ and consider a nontrivial partition $[n]=X\cup Y$. If $|X|>2$ or  $|Y|>2$, then $\rho_\Ac(X)+\rho_\Ac(Y)>\rho_\Ac([n])$. Let $|X|=|Y|=2$. Then $\rho_\Ac(X)\geq s-1$ and $\rho_\Ac(Y)\geq s-1$ and then $\rho_\Ac(X)+\rho_\Ac(Y)\geq 2s-2$. Since $G$ is connected, $s\geq n-1=3$. Hence $2s-2> s=\rho_\Ac([n])$. Then we conclude that $[n]$ is $\rho_\Ac$-inseparable if and only if $n>3$.
			Hence for $n>3$, $\Cl(R_{\Pc_\Ac})$ is free, since $s - 1$ and $s$ are relatively prime and there exists at least one edge which is not a vertex cover of $G$. Moreover, the rank of $\Cl(R_{\Pc_\Ac})$ is $n-l+m$, where $l$ is the number of leaves of $G$ and $m$ is the number of edges of $G$ which are not vertex covers of $G$. Finally, if $n=3$, then $\Cl(R_{\Pc_\Ac})$ is free of rank $2$.  
}
\end{Example}

The following theorem characterizes transversal polymatroids whose divisor class groups are finite.	
	\begin{Theorem}
	\label{finite}
	Let $\Pc_\Ac$ be the transversal polymatroid presented by $\Ac = (A_1, \ldots, A_s)$.  Then $\Cl(R_{\Pc_\Ac}) = \ZZ/d\ZZ$ with $d > 0$ if and only if  $A_1 = \cdots = A_s = [n]$ and $s=d$.  
\end{Theorem}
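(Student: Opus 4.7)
The statement is a direct consequence of the general structure theorem already established for the class group of a transversal polymatroid, namely Theorem~\ref{basic}, combined with the uniqueness criterion provided by Lemma~\ref{closed_inseparable2}. My plan is therefore to translate the condition ``$\Cl(R_{\Pc_\Ac})$ is a nontrivial finite cyclic group'' into constraints on the indexing set $\Ac$ of $\rho_\Ac$-closed and $\rho_\Ac$-inseparable subsets of $[n]$, and then extract the precise value of $d$ from the rank function $\rho_\Ac$.

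For the forward direction, suppose $\Cl(R_{\Pc_\Ac}) \iso \ZZ/d\ZZ$ with $d>0$. Theorem~\ref{basic} gives $\Cl(R_{\Pc_\Ac}) \iso \ZZ^{r-1} \dirsum \ZZ/d'\ZZ$, where $r = |\Ac|$ is the number of $\rho_\Ac$-closed and $\rho_\Ac$-inseparable subsets of $[n]$, and $d' = \gcd\{\rho_\Ac(A) : A \in \Ac\}$. Since the free part must vanish, we must have $r = 1$. Invoking Lemma~\ref{closed_inseparable2}, the existence of a unique $\rho_\Ac$-closed and $\rho_\Ac$-inseparable subset forces $A_1 = \cdots = A_s = [n]$, and in this case the unique such subset is $[n]$ itself. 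A direct computation of the rank function gives $\rho_\Ac([n]) = |\{k \in [s] : [n] \cap A_k \neq \emptyset\}| = s$, so $d = d' = \rho_\Ac([n]) = s$.

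Conversely, if $A_1 = \cdots = A_s = [n]$, then Lemma~\ref{closed_inseparable2} ensures that $\Ac = \{[n]\}$ has a single element, so the free rank in Theorem~\ref{basic} is zero, and again $\rho_\Ac([n]) = s$. Hence $\Cl(R_{\Pc_\Ac}) \iso \ZZ/s\ZZ$, which equals $\ZZ/d\ZZ$ precisely when $s = d$.

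There is no real obstacle: both halves of the theorem are short deductions once Theorem~\ref{basic} and Lemma~\ref{closed_inseparable2} are in place. The only point requiring a brief verification is the evaluation $\rho_\Ac([n]) = s$ in the case $A_1 = \cdots = A_s = [n]$, which is immediate from the definition of $\rho_\Ac$.
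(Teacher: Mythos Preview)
Your proof is correct and follows exactly the approach the paper intends: the paper simply states that Lemma~\ref{closed_inseparable2} guarantees the theorem, and you have spelled out how that lemma combines with Theorem~\ref{basic} to yield both directions, including the computation $\rho_\Ac([n])=s$. The only minor point is the overloaded use of $\Ac$ (once for the presenting family $(A_1,\ldots,A_s)$ and once for the set of $\rho_\Ac$-closed $\rho_\Ac$-inseparable subsets from Theorem~\ref{basic}); since you explicitly say what $r$ counts, no confusion results, but you may want to rename one of them.
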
	

\begin{proof}
The result follows from Theorem \ref{basic} and Lemma \ref{closed_inseparable2}.
\end{proof}

\begin{Lemma}
\label{closed_inseparable2}
Let $\Pc_\Ac$ be the transversal polymatroid presented by $\Ac = (A_1, \ldots, A_s)$.  Then there is a unique $\rho_\Ac$-closed and $\rho_\Ac$-inseparable subset of $[n]$ if and only if $A_1 = \cdots = A_s = [n]$.  
\end{Lemma}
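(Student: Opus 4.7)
The plan is to prove both implications. The easy direction ($\Leftarrow$) assumes all $A_k = [n]$: then every nonempty $X \subseteq [n]$ satisfies $\rho_\Ac(X) = s$, so $[n]$ is the only $\rho_\Ac$-closed subset, and it is $\rho_\Ac$-inseparable because any nontrivial partition $[n] = X_1 \sqcup X_2$ gives $\rho_\Ac(X_1) + \rho_\Ac(X_2) = 2s > s$.

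For the reverse direction, I argue contrapositively. Suppose some $A_k \neq [n]$; I shall exhibit two distinct $\rho_\Ac$-closed and $\rho_\Ac$-inseparable subsets of $[n]$. View $\Ac$ as a hypergraph on $[n]$ and split on whether it is \emph{connected}, in the sense that $[n]$ has no partition into two nonempty parts with each $A_k$ lying entirely in one side. If $\Ac$ is disconnected with connected components $C_1, \ldots, C_m$ ($m \geq 2$), then each $C_j$ is $\rho_\Ac$-closed (since $[n] \setminus C_j = \bigsqcup_{i \neq j} C_i$ is the union of those $A_k$'s it contains) and $\rho_\Ac$-inseparable (its induced sub-hypergraph is connected by construction), so the $C_j$ already furnish at least two closed-inseparable sets.

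If $\Ac$ is connected, then $[n]$ itself is $\rho_\Ac$-closed and $\rho_\Ac$-inseparable, so it suffices to find one more closed-inseparable set strictly contained in $[n]$. Choose $A_1 \neq [n]$, put $Y = [n] \setminus A_1 \neq \emptyset$, and let $Y_1$ be any connected component of the hypergraph on $Y$ whose edges are the nonempty sets $A_k \cap Y$; such edges exist because the $A_k$'s cover $[n]$ and $A_1 \neq [n]$. The key claim is that $Y_1$ is both $\rho_\Ac$-closed and $\rho_\Ac$-inseparable. Inseparability is immediate from the definition of connected component. For closedness, given $\ell \in [n] \setminus Y_1$, I produce an $A_k$ containing $\ell$ and disjoint from $Y_1$: if $\ell \in A_1$ take $A_k = A_1$, which is disjoint from $Y \supseteq Y_1$; if $\ell \in Y \setminus Y_1$ take any $A_k$ with $\ell \in A_k$, and observe that $A_k \cap Y$ must lie entirely in the component of $\ell$ in the restricted hypergraph, which differs from $Y_1$. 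Since $Y_1 \subseteq Y \subsetneq [n]$, this produces the required second closed-inseparable set.

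The main technical obstacle is verifying closedness of $Y_1$ in the connected case: one has to handle the two distinct sources of elements outside $Y_1$ (those in $A_1$ versus those in other components of $Y$) and exhibit in each case a witnessing $A_k$ disjoint from $Y_1$. Once this is in place, the rest of the argument flows naturally from the decomposition of $[n]$ (respectively $Y$) into connected components.
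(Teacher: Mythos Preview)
Your argument is correct. The overall strategy matches the paper's: first dispose of the case where $[n]$ is $\rho_\Ac$-separable (your ``disconnected'' case) by locating closed-inseparable subsets inside each piece, and then, assuming $[n]$ is $\rho_\Ac$-inseparable, produce a second, proper closed-inseparable subset.

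Where you differ is in how that second subset is built. You take a connected component $Y_1$ of the hypergraph on $Y=[n]\setminus A_1$ with edges $A_k\cap Y$, and verify closedness by a case split on whether the extra element lies in $A_1$ or in another component. The paper instead fixes $j\notin A_i$ and takes a maximal $Z$ with $j\in Z$ and $\rho_\Ac(\{j\})=\rho_\Ac(Z)$; closedness is then immediate from maximality, and inseparability follows because whichever side of a partition contains $j$ already has rank $\rho_\Ac(Z)$, while the other side contributes at least $1$. Your hypergraph-component viewpoint is more explicit and makes the combinatorics visible; the paper's maximal-set trick is terser and avoids the need to analyse two types of outside elements. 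Both are short and clean.
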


\begin{proof}
If $A_1 = \cdots = A_s = [n]$, then $[n]$ is the unique $\rho_\Ac$-closed and $\rho_\Ac$-inseparable subset of $[n]$.  

Now, suppose that there is a unique $\rho_\Ac$-closed and $\rho_\Ac$-inseparable subset of $[n]$. 
Since $[n] =\cup_{i=1}^d A_i$, we have 
$\rho_\Ac(\{j\})>0$ for any $j\in [n]$. 
Therefore for each element $j$, the set $B_j=\{k:\rho_\Ac(\{j,k\})=\rho_\Ac(\{j\})\}$ is  $\rho_\Ac$-closed and $\rho_\Ac$-inseparable. Indeed, for any nonempty subset $B\subseteq B_j$ with $j\in B$, one has $\rho_\Ac(B)=\rho_\Ac(\{j\})$. This shows that $B_j$ is $\rho_\Ac$-inseparable.  For any set $C$ with $B_j\subsetneq C$, we have $\rho_\Ac(C)>\rho_\Ac(\{j\})=\rho_\Ac(B_j)$. This implies that $B_j$ is $\rho_\Ac$-closed.
 Since there is only one $\rho_\Ac$-closed and $\rho_\Ac$-inseparable set, it follows that $\rho_\Ac(\{j\})=\rho_\Ac(\{j,k\})=\rho_\Ac(\{k\})$ for all $j$ and $k$. So all elements belong to $A_i$ for each $i$, and hence each $A_i$ is $[n]$. 


\end{proof}

\begin{Theorem}
	\label{2_prime_THEOREM}
	Let $\Pc_\Ac$ be the transversal polymatroid presented by $\Ac = (A_1, \ldots, A_s)$.  Then $\Cl(R_{\Pc_\Ac})$ is $\ZZ$ or $\ZZ \oplus \ZZ/d \ZZ$ if and only if one of the following conditions is satisfied:
	\begin{itemize}
		\item[(i)]
		There is a decomposition $[n] = B \cup C$, where $B \neq \emptyset, C \neq \emptyset, B \cap C = \emptyset$ for which, for some $1 \leq q < s$, one has $A_1 = \cdots = A_q = B$ and $A_{q+1} = \cdots = A_s = C$.
		\item[(ii)]
		There is a nonempty subset $A \subsetneq [n]$ for which, for some $1 \leq q < s$, one has $A_1 = \cdots = A_q = A$ and $A_{q+1} = \cdots = A_s = [n]$. 
	\end{itemize}  
	Moreover,   $\Cl(R_{\Pc_\Ac}) = \ZZ$ if and only if $q$ and $s-q$ are relatively prime.  If the greatest common divisor of $q$ and $s-q$ is $d > 1$, then $\Cl(R_{\Pc_\Ac}) = \ZZ \oplus \ZZ/d \ZZ$.
\end{Theorem}

\begin{proof}
The statement follows from  from Theorem \ref{basic} and the following lemma. 
\end{proof}

\begin{Lemma}
	\label{2_prime}
	Let $\Pc_\Ac$ be the transversal polymatroid presented by $\Ac = (A_1, \ldots, A_s)$, where $(A_1, \ldots, A_s) \neq ([n], \ldots, [n])$.  Then there are exactly two $\rho_\Ac$-closed and $\rho_\Ac$-inseparable subsets of $\Pc_\Ac$ if and only if one of the following conditions is satisfied:
	\begin{itemize}
		\item[(i)]
		There is a decomposition $[n] = B \cup C$, where $B \neq \emptyset, C \neq \emptyset, B \cap C = \emptyset$, for which each $A_i$ is equal to either $B$ or $C$. 
		\item[(ii)]
		There is a nonempty subset $\Omega \subsetneq [n]$, for which each $A_i$ is equal to either $[n] \setminus \Omega$ or $[n]$.
	\end{itemize}
\end{Lemma}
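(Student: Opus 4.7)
For sufficiency I plan to verify each case by direct computation. If (i) holds, with $a = |\{k : A_k = B\}|$ and $b = |\{k : A_k = C\}|$, the formula
\[\rho_\Ac(X) = a\cdot [X\cap B \ne \emptyset] + b\cdot [X\cap C \ne \emptyset]\]
shows immediately that the closed subsets are $B$, $C$ and $[n]$, that $B$ and $C$ are inseparable, while $[n] = B \sqcup C$ is separable. If (ii) holds, the analogous formula
\[\rho_\Ac(X) = b\cdot [X \ne \emptyset] + a\cdot [X \not\subseteq \Omega]\]
(with $a = |\{k : A_k = [n]\setminus\Omega\}| \ge 1$ by the standing hypothesis and $b = |\{k : A_k = [n]\}| \ge 1$ since $\bigcup_k A_k = [n]$) gives exactly the two closed inseparable subsets $\Omega$ and $[n]$.

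For necessity, suppose there are exactly two $\rho_\Ac$-closed and $\rho_\Ac$-inseparable subsets of $[n]$. If $[n]$ is $\rho_\Ac$-separable, the decomposition argument in the proof of Lemma~\ref{closed_inseparable2} writes $[n] = Y_1 \sqcup \cdots \sqcup Y_t$, with each $A_k$ contained in some $Y_j$ and each $Y_j$ itself closed. A brief verification shows that the closed inseparable subsets of $[n]$ are the disjoint union over $j$ of the closed inseparable subsets of the sub-families $\Ac|_{Y_j} = \{A_k : A_k \subseteq Y_j\}$ on ground set $Y_j$. Since each $Y_j$ contributes at least one such subset, the total count of $2$ forces $t = 2$ and a unique closed inseparable subset in each $Y_j$; Lemma~\ref{closed_inseparable2} applied inside each $Y_j$ then yields $A_k = Y_j$ for every $A_k \subseteq Y_j$, which is condition (i) with $B = Y_1$ and $C = Y_2$.

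The main obstacle is the remaining case, where $[n]$ is $\rho_\Ac$-inseparable, so that $T_1 = [n]$ and a second set $T_2 \subsetneq [n]$ must be pinned down. Here I introduce the equivalence relation on $[n]$ defined by $p \sim q$ iff $\{k : p \in A_k\} = \{k : q \in A_k\}$, producing classes $E_1, \ldots, E_m$ and associated nonempty index sets $J_i = \{k : E_i \subseteq A_k\}$. Because $\rho_\Ac$ factors through this equivalence, every closed subset is a union of $E_i$'s; for $I \subseteq [m]$, the set $\bigcup_{i \in I} E_i$ is closed iff $J_j \not\subseteq \bigcup_{i \in I} J_i$ for all $j \notin I$, and inseparable iff the graph on $I$ with edges $\{\{i,j\} : J_i \cap J_j \ne \emptyset\}$ is connected. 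In particular, each singleton class $E_i$ is automatically inseparable and is closed precisely when $J_i$ is minimal in the inclusion poset on $\{J_1, \ldots, J_m\}$.

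The finishing argument is then the following. Two or more minimal $J_i$'s would supply two closed inseparable singleton classes in addition to $[n]$, so there is a unique minimum $J_1$ and hence $J_1 \subsetneq J_j$ for every $j \ne 1$. If $m \ge 3$, I choose a cover $J_1 \lessdot J_j$ in this poset; then $I = \{1, j\}$ is closed (any other $J_i \subseteq J_j$ would, together with $J_1 \subsetneq J_i$ and the distinctness of the $J$'s, give $J_1 \subsetneq J_i \subsetneq J_j$, contradicting the cover) and inseparable (since $J_1 \cap J_j = J_1 \ne \emptyset$), producing $E_1 \cup E_j \subsetneq [n]$ as a third closed inseparable set, a contradiction. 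Hence $m = 2$; incomparable $J_1, J_2$ would again leave two minimal elements, so $J_1 \subsetneq J_2$ after relabeling. Unpacking $J_i = \{k : E_i \subseteq A_k\}$ then forces $A_k = [n]$ for $k \in J_1$ and $A_k = E_2 = [n]\setminus E_1$ for $k \in J_2 \setminus J_1$, which is exactly condition (ii) with $\Omega = E_1$.
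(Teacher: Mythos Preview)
Your proof is correct. The sufficiency computations and the treatment of the case where $[n]$ is separable are close to the paper's (you package the latter more cleanly by invoking Lemma~\ref{closed_inseparable2} on each block of the decomposition, whereas the paper redoes that argument by hand). The genuine difference is in the case where $[n]$ is $\rho_\Ac$-inseparable. The paper works directly with $\Omega=\{j\in[n]:j\notin A_i\text{ for some }i\}$ and, for each $j\in\Omega$, the maximal set $X_j$ with $\rho_\Ac(\{j\})=\rho_\Ac(X_j)$; since each $X_j$ is $\rho_\Ac$-closed and $\rho_\Ac$-inseparable and strictly contained in $[n]$, the hypothesis forces all $X_j$ to coincide, from which the form of the $A_i$'s is read off. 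Your route is more structural: you pass to the quotient by the equivalence $p\sim q\Leftrightarrow\{k:p\in A_k\}=\{k:q\in A_k\}$, translate closedness and inseparability into conditions on the index sets $J_i$, and then run a short poset argument (unique minimum, cover analysis) to force $m=2$. Your framework makes the combinatorics very transparent and would generalise well to counting larger numbers of closed inseparable sets; the paper's argument is shorter and avoids setting up the auxiliary poset, at the cost of being somewhat more ad hoc. One small remark: after establishing a unique minimal $J_1$ you could drop the separate ``incomparable $J_1,J_2$'' clause in the $m=2$ step, since a unique minimal element of a finite poset is already the minimum.
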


\begin{proof}
	Suppose there are exactly two $\rho_\Ac$-closed and $\rho_\Ac$-inseparable subsets of $\Pc_\Ac$.
	
	First, suppose that $[n]$ is not $\rho_\Ac$-inseparable.  Then there is a decomposition $[n] = B \cup C$, where $B \neq \emptyset, C \neq \emptyset, B \cap C = \emptyset$, for which each $A_i$ is contained in either $B$ or $C$.  Each of $B$ and $C$ is $\rho_\Ac$-closed.  We claim that each of $B$ and $C$ is $\rho_\Ac$-inseparable.  If, say, $B$ is not $\rho_\Ac$-inseparable, then there is a decomposition $B = D \cup E$, where $D \neq \emptyset, E \neq \emptyset, D \cap E = \emptyset$, for which each $A_i \subset B$ is contained in either $D$ or $E$.  Let $k \in D$ and $Y$ the unique maximal subset of $[n]$ with $k \in Y$ and with $\rho_{\Ac}(\{k\}) = \rho_{\Ac}(Y)$.  Then $Y$ is $\rho_\Ac$-closed and $\rho_\Ac$-inseparable with $Y \subset D$.  This observation guarantees that each of $D, E$ and $C$ contains a $\rho_\Ac$-closed and $\rho_\Ac$-inseparable subset of $\Pc_\Ac$, a contradiction.  Hence both $B$ and $C$ are $\rho_\Ac$-closed and $\rho_\Ac$-inseparable.
	
	Now, let, say, $A_i \subsetneq B$ and $j \in B \setminus A_i$.  Let $X$ denote the unique maximal subset of $B$ with $j \in X$ and with $\rho_{\Ac}(\{j\}) = \rho_{\Ac}(X)$.  Then $X$ is $\rho_\Ac$-closed and $\rho_\Ac$-inseparable with $X \cap A_i = \emptyset$.  Hence $B, C$ and $X$ are $\rho_\Ac$-closed and $\rho_\Ac$-inseparable, a contradiction. 
	
	Second, suppose that $[n]$ is $\rho_\Ac$-closed and $\rho_\Ac$-inseparable.  
As was shown in the proof of Lemma~\ref{closed_inseparable2} for each $j$, the set $B_j=\{k:\rho_\Ac(\{j,k\})=\rho_\Ac(\{j\})\}$ is  $\rho_\Ac$-closed and $\rho_\Ac$-inseparable.	If $B_j=[n]$ for all $j$, then $A_1=\cdots=A_s=[n]$ which is not the case by our assumption. Hence there exists $j$ with $B_j\neq [n]$. Set $\Omega=B_j$. Then for any $k$ with $B_k\neq [n]$, we have $\Omega=B_k$. Hence  $\Omega=\{j\in [n]:\ B_j\neq [n]\}$ and $j\notin \Omega$ if and only if $j\in\cap_{i=1}^s A_i$. Hence $[n]\setminus \Omega\subseteq A_i$ for all $i$. Moreover, for each $i$ we have either $\Omega\subset A_i$ or $\Omega\cap A_i=\emptyset$. It follows that for each $i$ either $A_i = [n]$ or $A_i = [n] \setminus \Omega$.  Since $[n] = A_1 \cup \cdots \cup A_s$, there is an $i$ with $A_i = [n]$.   
	
	
	\medskip
	
	On the other hand, if (i) or (ii) is satisfied, then clearly there are exactly two $\rho_\Ac$-closed and $\rho_\Ac$-inseparable subsets of $\Pc_\Ac$.       
\end{proof}

\section{Gorenstein polymatroids of Veronese type}
\def\sb{{\mathbf s}}
Fix an integer $d$ and a sequence $\sb = (s_1, \ldots s_n)$ of integers with $1 \leq s_1 \leq \cdots \leq s_n \leq d$ and $d < \sum_{i=1}^{n} s_i$.  The discrete polymatroid {\em of Veronese type} $(\sb, d)$ is the discrete polymatroid \[
\Pc_{\sb,d} = \{\vb \in \ZZ^n_{+} : v_i \leq s_i, |\vb| \leq d\}.
\]
Let $\rho_{\sb,d} = \rho_{\Pc_{\sb,d}}$ denote the rank function of $\Pc_{\sb,d}$.

Corollary \ref{gorenstein} enables us to classify Gorenstein discrete polymatroids of Veronese type.

\begin{Theorem}
	\label{Gorenstein_Veronese}
	The toric ring $R_{\Pc_{\sb, d}}$ is Gorenstein if and only if one of the following conditions is satisfied:
	\begin{itemize}
		\item[(i)]
		each $s_i = 2$ and $n = d - 1 \geq 2$;
		\item[(ii)]
		each $s_i = 1$ and $n = 2d - 1 \geq 3$.
	\end{itemize}
\end{Theorem}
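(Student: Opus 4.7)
The plan is to apply Corollary~\ref{gorenstein} directly, using the explicit description of the $\rho_{\sb,d}$-closed and $\rho_{\sb,d}$-inseparable subsets furnished by Lemma~\ref{veronese_rank}. By that lemma, these subsets are precisely the singletons $\{1\},\ldots,\{n\}$ together with $[n]$, and the corresponding rank values are $\rho_{\sb,d}(\{i\}) = s_i$ and $\rho_{\sb,d}([n]) = d$. Corollary~\ref{gorenstein} then asserts that $R_{\Pc_{\sb,d}}$ is Gorenstein if and only if there exists an integer $a$ such that
\[
a\,s_i \;=\; 2 \quad (1 \le i \le n) \qquad\text{and}\qquad a\,d \;=\; n+1.
\]

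The first batch of equations is the main lever. Since each $s_i$ is a positive integer and $as_i = 2$, the only possibilities are $a=1$ with every $s_i=2$, or $a=2$ with every $s_i=1$. I then substitute each case into $ad = n+1$ to pin down the relation between $n$ and $d$. In the first case this gives $d = n+1$, i.e.\ $n = d-1$, and in the second case $2d = n+1$, i.e.\ $n = 2d-1$. These match the equalities appearing in conditions (i) and (ii) of the theorem.

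What remains is to verify that the standing hypotheses on a Veronese-type sequence, namely $1 \le s_1 \le \cdots \le s_n \le d$ and $d < \sum_{i=1}^n s_i$, force precisely the additional bounds $n \ge 2$ in case (i) and $n \ge 3$ in case (ii). In case (i), $\sum s_i = 2n = 2(d-1)$, so $d < 2(d-1)$ gives $d \ge 3$, hence $n = d-1 \ge 2$; the bound $s_n = 2 \le d$ is then automatic. In case (ii), $\sum s_i = n = 2d-1$, so $d < 2d-1$ gives $d \ge 2$, hence $n = 2d-1 \ge 3$. Conversely, under either (i) or (ii) the choice $a=1$, respectively $a=2$, satisfies all of Corollary~\ref{gorenstein}'s equations, so $R_{\Pc_{\sb,d}}$ is Gorenstein.

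There is no real obstacle here: once Lemma~\ref{veronese_rank} identifies the relevant subsets and their ranks, the Gorenstein criterion collapses to the trivial divisibility $a s_i = 2$, and the rest is a short bookkeeping exercise with the Veronese-type inequality constraints.
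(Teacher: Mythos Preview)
Your proof is correct and follows essentially the same approach as the paper: both apply Corollary~\ref{gorenstein} together with Lemma~\ref{veronese_rank} to reduce the Gorenstein condition to $as_i=2$ and $ad=n+1$, and then split on $a=1$ or $a=2$. If anything, your write-up is more complete, since you explicitly derive the bounds $n\ge 2$ and $n\ge 3$ from the standing hypothesis $d<\sum_i s_i$, which the paper leaves implicit.
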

In order to prove Theorem~\ref{Gorenstein_Veronese} we need the following 
\begin{Lemma}
\label{veronese_rank}
The $\rho_{\sb,d}$-closed and $\rho_{\sb,d}$-inseparable subsets are each $\{i\}$ and $[n]$.  Furthermore, $\rho_{\sb,d}(\{i\}) = s_i$ and $\rho_{\sb,d}([n]) = d$. 
\end{Lemma}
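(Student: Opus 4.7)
The plan is in three stages: find an explicit formula for $\rho := \rho_{\sb,d}$, classify its $\rho$-closed subsets, and then determine which of them are $\rho$-inseparable.

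First, I would establish the formula
\[
\rho(A) \;=\; \min\Bigl(\sum_{i\in A} s_i,\; d\Bigr)
\]
by exhibiting an explicit $\vb \in \Pc_{\sb,d}$ achieving this value (set $v_i = s_i$ for $i \in A$ and $v_i = 0$ elsewhere when $\sum_{i\in A} s_i \leq d$; otherwise choose $0 \leq v_i \leq s_i$ supported on $A$ with $|\vb| = d$), and noting that no $\vb \in \Pc_{\sb,d}$ can do better because $\vb(A) \leq \min(\sum_{i\in A} s_i,\, |\vb|) \leq \min(\sum_{i\in A} s_i,\, d)$. From this formula, $\rho(\{i\}) = s_i$ (using $s_i \leq d$) and $\rho([n]) = d$ (using $d < \sum_{i=1}^n s_i$), which is the second half of the lemma.

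Next I would classify the $\rho$-closed subsets. The set $[n]$ is $\rho$-closed vacuously. For a proper nonempty $A \subsetneq [n]$: if $\sum_{i\in A} s_i \geq d$, then $\rho(A) = d$ and every superset of $A$ also has rank $d$, so $A$ is not closed; if $\sum_{i\in A} s_i < d$, then for every $j \notin A$ one has $\rho(A \cup \{j\}) = \min(\sum_{i\in A} s_i + s_j,\, d) > \sum_{i\in A} s_i = \rho(A)$ since $s_j \geq 1$, so $A$ is $\rho$-closed. In particular every singleton $\{i\}$ is $\rho$-closed, and no proper subset of $[n]$ with $\sum_{i\in A} s_i \geq d$ survives as a candidate.

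Finally, I would handle inseparability. Singletons are $\rho$-inseparable by definition. For a $\rho$-closed set $A \subsetneq [n]$ with $|A| \geq 2$, the previous step gives $\sum_{i\in A} s_i < d$, so for any nontrivial disjoint decomposition $A = A_1 \cup A_2$ the partial sums are also strictly below $d$, and $\rho(A_1) + \rho(A_2) = \sum_{i\in A_1} s_i + \sum_{i\in A_2} s_i = \rho(A)$; hence $A$ is separable and drops out. The main bookkeeping step is showing $[n]$ is $\rho$-inseparable: for a nontrivial partition $[n] = A_1 \cup A_2$, I split into the case where some $\rho(A_j) = d$ (then $\rho(A_1) + \rho(A_2) \geq d + s_1 > d = \rho([n])$, using $\rho(A_{3-j}) \geq s_1 \geq 1$) and the case where $\rho(A_j) = \sum_{i \in A_j} s_i$ for both $j$ (then $\rho(A_1) + \rho(A_2) = \sum_{i=1}^n s_i > d$). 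Either way strict inequality holds, so $[n]$ is $\rho$-inseparable; combining the three steps pinpoints $\{1\}, \ldots, \{n\}$ and $[n]$ as the full list and proves the lemma.
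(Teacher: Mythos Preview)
Your proposal is correct and follows essentially the same route as the paper: both establish the formula $\rho(A)=\min\{\sum_{i\in A}s_i,\,d\}$, use the dichotomy (if $\rho(A)=\sum_{i\in A}s_i$ then $A$ is separable, if $\rho(A)=d$ then $A$ is not closed) to eliminate all $A$ with $1<|A|<n$, and verify the endpoints $\{i\}$ and $[n]$ directly. The only cosmetic difference is that you first isolate the closed sets and then test inseparability, whereas the paper treats the two failures in one breath; your case split for the inseparability of $[n]$ is also spelled out in more detail than the paper's one-line ``Hence $\rho([n])<\rho(A)+\rho(B)$.''
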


\begin{proof}
If $\emptyset \neq A \subset [n]$, then $\rho_{\sb,d}(A) = \min\{\sum_{i \in A} s_i, d\}$.  Let $A \subset [n]$ with $1 <|A| < n$.  If $\rho_{\sb,d}(A) = \sum_{i \in A}s_i$, then $A$ cannot be $\rho_{\sb,d}$-inseparable.  If $\rho_{\sb,d}(A) = d$, then $A$ cannot be $\rho_{\sb,d}$-closed.

First, one shows that $[n]$ is $\rho_{\sb,d}$-closed and $\rho_{\sb,d}$-inseparable.  Clearly $[n]$ is $\rho_{\sb,d}$-closed.  Let $[n] = A \cup B$ with $A \neq \emptyset, B \neq \emptyset, A \cap B = \emptyset$.  Then $\rho_{\sb,d}(A) = \min\{\sum_{i \in A} s_i, d\}$ and $\rho_{\sb,d}(B) = \min\{\sum_{i \in B} s_i, d\}$.  Hence $\rho_{\sb,d}([n]) < \rho_{\sb,d}(A) +\rho_{\sb,d}(B)$.  Thus $[n]$ is $\rho_{\sb,d}$-inseparable. 

Second, one shows that each $\{i\}$ is $\rho_{\sb,d}$-closed and $\rho_{\sb,d}$-inseparable.  Clearly $\{i\}$ is $\rho_{\sb,d}$-inseparable.  Let $j \neq i$.  Then $\rho_{\sb,d}(\{i,j\}) = \min\{s_i + s_j, d\}$.  Since $s_i < d$ and $s_j > 0$, one has $\rho_{\sb,d}(\{i\}) < \rho_{\sb,d}(\{i,j\})$.  Hence $\{i\}$ is $\rho_{\sb,d}$-closed.
\end{proof}

{\em Proof of Theorem~\ref{Gorenstein_Veronese}}.  
It follows that $R_{\Pc_{\sb, d}}$ is Gorenstein if and only if there is an integer $a \geq 1$ for which
\[
\rho_{\sb,d}(\{i\}) = s_i = \frac{2}{\,a\,}, \, \, \, \rho_{\sb,d}([n]) = d = \frac{n+1}{\,a\,}.
\]
Let $a = 1$.  Then each $s_i = 2$ and $n = d-1 \geq 2$.  Let $a=2$.  Then each $s_i = 1$ and $n = 2d - 1 \geq 3$, as desired.

\end{document}